\newtheorem{theorem}{Theorem}[section]
\newtheorem{lemma}[theorem]{Lemma}
\newtheorem{corollary}[theorem]{Corollary}
\newtheorem{proposition}[theorem]{Proposition}
\newtheorem*{notation*}{Notation}
\newtheorem*{p*}{Proposition~\ref{h.s.o.p}}
\theoremstyle{definition}
\newtheorem{definition}[theorem]{Definition}
\newtheorem{remark}[theorem]{Remark}
\newcommand{\M}{{\operatorname{Mat}}}
\newcommand{\N}{{\mathbb Z}_{\geq 0}}
\newcommand{\Z}{{\mathbb Z}}
\newcommand{\K}{K}
\newcommand{\Sl} {{\operatorname{SL} }}
\newcommand{\SL}{{\operatorname{SL}_n \times \operatorname{SL}_n}}
\newcommand{\rank}{\operatorname{rank}}
\newcommand{\Hom}{\operatorname{Hom}}
\newcommand{\kar}{\operatorname{char}}
\newcommand{\llangle}{\,\,(\!\!\!\!<\!}
\newcommand{\rrangle}{\!>\!\!\!\!)\,\,}
\newcommand{\Q}{{\mathbb Q}}
\newcommand{\GL}{\operatorname{GL}}
\newcommand{\Rep}{\operatorname{Rep}}
\newcommand{\SI}{\operatorname{SI}}
\newcommand{\lcm}{\operatorname{lcm}}
\newcommand{\Proj}{\operatorname{Proj}}
\title{Polynomial degree bounds for matrix semi-invariants}
\author{Harm Derksen and Visu Makam}
\thanks{The first author was supported by NSF grant DMS-1302032 and the second author was supported by NSF grant DMS-1361789}
\begin{document}

\begin{abstract}
We study the left-right action of $\SL$ on $m$-tuples of $n \times n$ matrices with entries in an infinite field $\K$. We show that invariants of degree $n^2- n$ define the null cone. Consequently,  invariants of degree $\leq n^6$ generate the ring of invariants if $\kar(K)=0$. We also prove that for $m \gg 0$,  invariants of degree at least $n\lfloor \sqrt{n+1}\rfloor$ are required to define the null cone. We generalize our results to matrix invariants of  $m$-tuples of $p\times q$ matrices,
and to rings of semi-invariants for quivers. For the proofs, we use new techniques such as the regularity lemma by Ivanyos, Qiao and Subrahmanyam, and the concavity property of the tensor blow-ups of matrix spaces. We will discuss several applications to algebraic complexity theory, such as a deterministic polynomial time algorithm for non-commutative rational identity testing, and the existence of small division-free formulas for non-commutative polynomials.
\end{abstract}

\maketitle

\section{Introduction} \label{intro}
\subsection{Degree bounds for invariant rings}
Let $\M_{p,q}$ be the set of $p\times q$ matrices with entries in an infinite field $K$.  The group $\GL_n$ acts on $\M_{n,n}^m$ by simultaneous conjugation.
Procesi showed that in characteristic 0, the invariant ring is generated by traces of words in the matrices.  Razmyslov (\cite[final remark]{Raz}) showed that that the invariant ring is generated by polynomials of degree $\leq n^2$ by studying trace identities (see also~\cite{Formanek}). In positive characteristic, 
generators of the invariant ring were given by Donkin in \cite{Donkin1,Donkin2}.  Domokos proved an upper bound $O(n^7m^n)$ for the degree of generators (see~\cite{Dom02,DKZ}). 

In this paper we will focus on the left-right action of $G=\SL$ on the space $V=\M_{n,n}^m$ of $m$-tuples of $n\times n$ matrices. This action is given by
$$
(A,B)\cdot (X_1,X_2,\dots,X_m)=(AX_1B^{-1},AX_2B^{-1},\dots,AX_mB^{-1}).
$$
The group $G$ also acts on the graded ring $K[V]$ of polynomial functions on $V$, and the subring of $G$-invariant polynomials is denoted by
$R(n,m)=K[V]^G$. This subring inherits a grading $R(n,m)=\bigoplus_{d=0}^\infty R(n,m)_d$.  We have $R(n,m)_d=0$ unless $d$ is divisible by $n$ (see Theorem~\ref{theo:gens}). It is well-known that $R(n,1)$ is generated by the determinant $\det(X_1)$, and $R(n,2)$ is generated by the coefficients of $\det(X_1+tX_2)$ as a polynomial in $t$. Because the group $G$ is reductive, this invariant ring is finitely generated (see~\cite{Hilbert1, Hilbert2, Nagata, Haboush}).
\begin{definition}
The number $\beta(n,m)$ is the smallest nonnegative integer $d$ such that $R(n,m)$ is generated by invariants of degree $\leq d$.
\end{definition}

The following bounds are known if $K$ has characteristic $0$:
\begin{enumerate}
\item $\beta(n,1)=\beta(n,2)=n$;
\item $\beta(1,m)=1$;
\item $\beta(2,m)\leq 4$;
\item $\beta(3,3) = 9$;
\item $\beta(3,m)\leq 309$;
\item $\beta(n,m)\geq n^2$ if $m\geq n^2$;
\item $\beta(n,m)=O(n^4((n+1)!)^2)$.
\end{enumerate}
The bounds in (1) follow from the descriptions of $R(n,1)$ and $R(n,2)$ above and (2) is trivial. The bound (3) can be found in \cite{Dom00a} (see also~\cite{IQS2}). This bound also follows from the First Fundamental Theorem of Invariant Theory
for $\operatorname{SO}_4$, because $\Sl_2\times \Sl_2$ is a finite central extension of $\operatorname{SO}_4$ and
the representation $\M_{2,2}$ of $\Sl_2\times \Sl_2$ corresponds to the standard $4$-dimensional representation of
  $\operatorname{SO}_4$. The bound (4) was given in \cite{Dom00b}.
  (5) and (6) were proved by the second author in \cite{Visu}. Some explicit upper bounds for $\beta(3,m)$
  for $m=4,5,6,7,8$ that are sharper than (5) were also given in \cite{Visu}.
  For the ring of invariants of a rational representation of a reductive group, there is a general bound on the degree of generating invariants (see \cite{Derksen1} and \cite[Section~4.7]{DK}). This bound gives $O(n^816^{n^2})$ and Ivanyos, Qiao and Subrahmanyam showed in \cite{IQS,IQS2} that this bound can be improved to (7). We will improve this factorial bound to a polynomial one:
\begin{theorem}\label{deg.bds}
If $K$ has characteristic $0$, then we have $\beta(n,m)\leq mn^4$.
\end{theorem}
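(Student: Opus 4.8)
The plan is to first reduce the question of bounding $\beta(n,m)$ — the degree up to which generators are needed — to a question about the \emph{null cone} $\mathcal{N}(n,m)$, i.e. the common zero locus of all positive-degree invariants. The key point is that a theorem of the first author (the Derksen degree bound, see \cite{Derksen1, DK}) says that if the null cone is \emph{defined} by invariants of degree $\leq D$ (in the sense that there exist invariants $f_1,\dots,f_r$ of degree $\leq D$ whose common zero set is exactly $\mathcal{N}(n,m)$), then the full invariant ring $R(n,m)$ is generated in degree $\leq$ some polynomial in $D$ and the other numerical data (dimension, degrees of a h.s.o.p.\ on the null cone, etc.). So the heart of the matter is: \textbf{invariants of low degree suffice to cut out the null cone.} I would expect the paper to prove that invariants of degree $n^2-n$ (or thereabouts, matching the abstract) already define the null cone, and then feed that into the general machinery to get a polynomial overall bound of the shape $mn^4$.

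**The main tool for controlling the null cone** should be the characterization of the null cone for the left-right action in terms of \emph{non-commutative rank} / the blow-up (tensor) construction: a tuple $(X_1,\dots,X_m)$ lies in the null cone if and only if the "generic matrix" $\sum_i X_i \otimes Y_i$ (equivalently, the matrix pencil/blow-up $\sum_i X_i \otimes T_i$ over a field extension or over generic $d\times d$ matrices $T_i$) is \emph{not} of full rank for any size of blow-up — equivalently the non-commutative rank $\ncrk(X_1,\dots,X_m) < n$. This is where the \emph{regularity lemma of Ivanyos, Qiao and Subrahmanyam} enters: it says that if a space of matrices has non-commutative rank $< n$, then already a bounded-size blow-up (polynomial in $n$) witnesses a large common "shrunk subspace," i.e.\ certifies the rank deficiency. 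Concretely, one gets that for $(X_1,\dots,X_m)$ in the null cone there is a blow-up of size $d = O(n)$ such that $\sum X_i \otimes T_i$ drops rank, and this rank-deficiency is itself detected by the vanishing of certain minors — which, pulled back, are invariants of $R(n,m)$ of degree roughly $d \cdot n = O(n^2)$. The \emph{concavity property of tensor blow-ups of matrix spaces} is presumably what lets one keep the blow-up size linear in $n$ rather than quadratic or worse, and hence keeps the degree of the cutting-out invariants at $O(n^2)$.

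**Assembling the pieces:** once we know the null cone is defined by invariants of degree $\leq c n^2$ for an explicit small constant $c$ (the abstract suggests $c n^2$ with $cn^2 = n^2 - n$), I would apply the Derksen-type bound. That bound typically reads: $\beta \leq \max(D, \sigma \cdot D')$ or a product of the defining degree with the degrees of a homogeneous system of parameters on the null cone, or more crudely $\beta = O(D \cdot (\text{something polynomial}))$; combined with $\dim V = mn^2$ and $D = O(n^2)$, this should yield a bound of the form $O(mn^4)$, and a careful bookkeeping of constants gives exactly $\beta(n,m) \leq mn^4$. The role of the hypothesis $\kar(K) = 0$ is exactly here — the general degree bound for generators of invariant rings of reductive groups is only known (via the Noether-type / Cayley-$\Omega$-process arguments) in characteristic zero, whereas the statement that low-degree invariants define the null cone should hold over any infinite field.

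**The hard part** will be the null-cone statement, and specifically getting the blow-up size down to $O(n)$: bounding non-commutative rank deficiency by a \emph{linear-size} witness is genuinely delicate, and this is presumably where the regularity lemma plus the concavity of blow-ups do the real work. A secondary subtlety is to make sure the minors of the blown-up matrix, when interpreted as functions on $V = \M_{n,n}^m$, are actually $\SL \times \SL$-\emph{invariant} (not just semi-invariant) and of the claimed degree — this is a standard but necessary check, using that a $d$-fold blow-up of an $n\times n$ matrix is $dn \times dn$, its determinant/large minors have degree $dn$ in the entries of the $X_i$'s, and the $\SL_n \times \SL_n$-weight works out to be trivial precisely because we are taking \emph{full}-size determinantal expressions. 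Finally, one must chase the constants through the general reductive degree bound carefully enough to land on the clean form $mn^4$ rather than some larger polynomial; I expect the paper optimizes the easy cases ($m$ small, $n$ small) separately if needed.
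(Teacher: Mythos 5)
Your high-level strategy matches the paper's: show invariants of degree $n^2-n$ already cut out the null cone, then convert this into a generation bound using a general degree bound for reductive groups in characteristic $0$. You also correctly identify the tools (tensor blow-ups, the Ivanyos--Qiao--Subrahmanyam regularity lemma, concavity of blow-ups) and correctly locate where the characteristic-$0$ hypothesis enters. However, there are two genuine gaps.

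First, the step that you defer to ``the Derksen degree bound, see \cite{Derksen1, DK}'' does not deliver the claimed exponent. The bound from \cite{Derksen1} is $\beta(K[V]^G)\leq\max\{2,\tfrac{3}{8}\dim(V)\,\sigma(V)^2\}$, where $\sigma(V)$ is the degree needed to define the null cone. With $\sigma=n^2-n$ and $\dim V=mn^2$ this gives $O(mn^6)$, not $mn^4$. To get $mn^4$ the paper instead uses Noether normalization to produce a homogeneous system of parameters $f_1,\dots,f_r$ \emph{all of degree exactly} $n^2-n$ (possible since invariants of that degree define the null cone), and then invokes Popov's bound $\beta\leq\max\{d_1+\cdots+d_r-r,\,d_1,\dots,d_r\}$, which with $r\leq\dim V=mn^2$ yields $\beta(n,m)\leq r(n^2-n-1)\leq mn^2(n^2-n-1)<mn^4$. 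You gesture at this option (``a product of the defining degree with the degrees of a h.s.o.p.~on the null cone''), but your default plan — the $\sigma^2$-type bound — would fall short, and you never decide which route you are actually taking.

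Second, you flag the null-cone statement as ``the hard part'' but do not actually prove it, and this is where the real content of the paper lies. The paper's argument is a careful descent: letting $r(p,q)=\rank(\mathcal{X}^{\{p,q\}})$, one proves $r$ is weakly increasing and weakly concave in each coordinate (by a dimension count on column spans of generic blow-ups), that $r(p,q)$ is divisible by $\gcd(p,q)$ (via the IQS regularity lemma), and that $r(1,1)=1$ forces $r(d,d)=d$ for all $d$. These combine into Proposition~\ref{red}: if $n\geq 2$, $d+1\geq n$ and $r(d+1,d+1)=n(d+1)$, then $r(d,d)=nd$. Iterating this descends any witness $r(d,d)=dn$ down to $d=n-1$, giving $\delta(n,m)\leq n-1$ and hence Theorem~\ref{main}. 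Your write-up treats this as a black box, so the part of the argument that actually makes the blow-up size \emph{linear} in $n$ — and thus makes the whole theorem work — is missing.
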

A theorem of Weyl (see \cite[Section 7.1, Theorem A]{KP}) essentially tells us that a bound on the degree of generating invariants for $R(n,n^2)$ will be a bound on the degree of generating invariants for $R(n,m)$ for all $m$.  So we have:
\begin{corollary}
If $K$ has characteristic $0$, then we have $\beta(n,m)\leq n^6$.
\end{corollary}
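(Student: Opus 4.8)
The plan is to reduce to the single case $m=n^2$ and then quote Theorem~\ref{deg.bds}. Taking $m=n^2$ there gives $\beta(n,n^2)\le n^2\cdot n^4=n^6$, so the corollary follows once we know that $\beta(n,m)\le\beta(n,n^2)$ for every $m$. This last inequality is exactly what the theorem of Weyl quoted above provides, and I would make it explicit by splitting into the cases $m\le n^2$ and $m\ge n^2$, where $n^2=\dim\M_{n,n}$ is the relevant ``number of variables''.

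For $m\le n^2$, write $V_k=\M_{n,n}^k$ and view $V_{n^2}=V_m\oplus V_{n^2-m}$ as a direct sum of $G$-modules for $G=\SL$. The coordinate inclusion $\iota\colon V_m\hookrightarrow V_{n^2}$ and the coordinate projection $\rho\colon V_{n^2}\to V_m$ are $G$-equivariant and satisfy $\rho\circ\iota=\mathrm{id}$, so the induced graded $G$-equivariant $K$-algebra maps satisfy $\iota^{*}\circ\rho^{*}=\mathrm{id}$ on $K[V_m]$; passing to $G$-invariants (which is functorial) shows that $\iota^{*}\colon R(n,n^2)\to R(n,m)$ is a surjection of graded rings, with $\rho^{*}$ as a section. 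Since a graded surjection carries algebra generators of degree $\le d$ to algebra generators of degree $\le d$, we get $\beta(n,m)\le\beta(n,n^2)\le n^6$. For $m\ge n^2$, I would invoke Weyl's theorem in the cited form (\cite[Section~7.1, Theorem~A]{KP}): since $m\ge\dim\M_{n,n}$, the ring $R(n,m)=K[V_m]^G$ is generated by the polarizations of any generating set of $R(n,n^2)=K[V_{n^2}]^G$, and polarization operators do not change total degree, so again $\beta(n,m)\le\beta(n,n^2)\le n^6$. Combining the two cases yields $\beta(n,m)\le n^6$ for all $m$.

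Modulo Theorem~\ref{deg.bds}, the corollary is essentially formal; the only step that deserves real attention is the appeal to Weyl's theorem, where one must check that its hypotheses apply to the left-right action of $G$ on $\M_{n,n}^m$ and that going through polarization and restitution genuinely does not inflate the degree bound. Both are classical, but this is the point at which the standing assumption $\kar(K)=0$ is used (it is of course already needed for Theorem~\ref{deg.bds}). I do not expect any of this to present a serious obstacle.
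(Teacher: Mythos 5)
Your proof is correct and follows the paper's own argument: substitute $m=n^2$ into Theorem~\ref{deg.bds} to get $\beta(n,n^2)\le n^6$, then invoke Weyl's theorem (\cite[Section~7.1, Theorem~A]{KP}) to reduce the general $m$ to $m=n^2$. The only cosmetic difference is that you treat $m\le n^2$ by the elementary restriction/section argument instead of citing Weyl uniformly, which is a harmless (and slightly more self-contained) way to organize the same reduction.
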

Given two matrices $A = (a_{ij})$ of size $m \times n$, and $B = (b_{ij})$ of size $p \times q$, we define their tensor (or Kronecker) product to be 
$$A \otimes B = \begin{bmatrix}
a_{11}B   & a_{12}B     &  \cdots     & a_{1n}B  \\
 a_{21}B  &   \ddots               &                 & \vdots  \\
 \vdots      &       &    \ddots  &\vdots   \\
 a_{m1}B  &       \cdots       & \cdots               & a_{mn}B \\
 \end{bmatrix}\in \M_{mp,nq}.$$
For $T=(T_1,T_2,\dots,T_m)\in \M_{d,d}^m$, we define an invariant $f_{T}\in R(n,m)$ of degree $dn$ by
$$
f_T(X_1,X_2,\dots,X_m)=\det(X_1\otimes T_1+X_2\otimes T_2+\cdots+X_m\otimes T_m).
$$

Consider the generalized Kronecker quiver $\theta(m)$ which is a graph with 2 vertices $x$ and $y$ and $m$ arrows from $x$ to $y$.
\begin{center}
\begin{tikzpicture}

\path node (x) at (0,0) []{$x$};
\path node (y) at (3,0) []{$y$};

\draw [->,thick] (x) .. controls (1,0.5) and (2,0.5) ..
node[midway,above] {$a_1$}
(y);

\draw [->,thick] (x) .. controls (1,-0.5) and (2,-0.5) ..
node[midway,below] {$a_m$}
(y);

\path node at (1.5,0.2) [] {.};
\path node at (1.5,0) [] {.};
\path node at (1.5,-0.2) [] {.};

\node at (3.5,0) []{$.$};
\end{tikzpicture}
\end{center}
Then $R(n,m)$ is the ring of semi-invariants for the quiver $\theta(m)$ and dimension vector $(n,n)$. 
Generators of $R(n,m)$ can be given in terms of determinants of certain block matrices, see \cite[Corollary 3]{DW},  \cite{DZ} and \cite{SVd}.
These results, applied to the Kronecker quiver $\theta(m)$ give:
\begin{theorem}\label{theo:gens}
The invariant ring $R(n,m)$ is generated by all  $f_T$ with $T\in \M_{d,d}^m$ and $d\geq 1$.
\end{theorem}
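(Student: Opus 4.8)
The plan is to obtain this by specializing the first fundamental theorem for semi-invariants of quivers (\cite[Corollary~3]{DW}, \cite{DZ}, \cite{SVd}) to the generalized Kronecker quiver $\theta(m)$ with dimension vector $(n,n)$. Here $\Rep(\theta(m),(n,n))=\M_{n,n}^m$, the base-change group is $\GL_{(n,n)}=\GL_n\times\GL_n$, and $\SL$ is its derived group, so $R(n,m)=\SI(\theta(m),(n,n))$. The easy half — that each $f_T\in R(n,m)$ — is a one-line check: for $T=(T_1,\dots,T_m)\in\M_{d,d}^m$ and $(A,B)\in\SL$ we have $\sum_i(AX_iB^{-1})\otimes T_i=(A\otimes I_d)\bigl(\sum_i X_i\otimes T_i\bigr)(B^{-1}\otimes I_d)$, so taking determinants and using $\det A=\det B=1$ gives invariance; moreover $f_T$ is homogeneous of degree $dn$, so once the $f_T$ are shown to span $R(n,m)$ this also recovers the fact that only degrees divisible by $n$ occur.

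For the reverse direction, decompose $R(n,m)=\bigoplus_\sigma\SI(\theta(m),(n,n))_\sigma$ into weight spaces for the central torus of $\GL_n\times\GL_n$. Rescaling the two $\GL_n$-factors shows that the only weights occurring are $\sigma=(c,-c)$ with $c\in\N$, and that $\SI(\theta(m),(n,n))_{(c,-c)}$ equals the degree-$cn$ part of $R(n,m)$. By the semi-invariant FFT this weight space is spanned over $K$ by the determinantal (Schofield) semi-invariants attached to representations $V$ of $\theta(m)$ with $\langle\underline{\dim}\,V,-\rangle=(c,-c)$; since for $\theta(m)$ one has $\langle(a,b),(a',b')\rangle=aa'+bb'-mab'$, this forces $\underline{\dim}\,V=(c,(m-1)c)$, and then $\langle\underline{\dim}\,V,(n,n)\rangle=0$ holds automatically, so the semi-invariant is defined.

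The key point is to recognize these spanning semi-invariants as the $f_T$. For the Kronecker quiver the determinantal form of the FFT (this is where \cite{DZ,SVd} go beyond the characteristic-zero statement of \cite{DW}) says precisely that they are, up to nonzero scalars, determinants of $c\times c$ block matrices with $n\times n$ blocks, each block a $K$-linear combination of $X_1,\dots,X_m$; such a matrix is exactly $\sum_i T_i\otimes X_i$ with $(T_i)_{ab}$ the coefficient of $X_i$ in the $(a,b)$-block, and $\det(\sum_i T_i\otimes X_i)=\det(\sum_i X_i\otimes T_i)=f_T(X)$ since the perfect-shuffle permutation conjugates one Kronecker sum to the other. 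Hence $R(n,m)_{cn}$ is spanned by $\{f_T:T\in\M_{c,c}^m\}$, and summing over $c$ shows the $f_T$ span, and in particular generate, $R(n,m)$. If instead one starts from the bare $c^V$ form of \cite[Corollary~3]{DW}, the same conclusion follows by writing $V$ via matrices $Y_1,\dots,Y_m\in\M_{(m-1)c,c}$, noting that the relevant semi-invariant is (up to a scalar) $\det$ of the square matrix representing $(\phi_x,\phi_y)\mapsto(X_i\phi_x-\phi_y Y_i)_i$, eliminating its $X$-free block column for generic $V$ by a Schur-complement computation to reach a matrix of the shape $\sum_i T_i\otimes X_i$, and using that $V\mapsto c^V$ is a morphism into the finite-dimensional space $R(n,m)_{cn}$, so that generic $V$ already span it.

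I expect the only real obstacle to be bookkeeping rather than a new idea: one must track the Euler-form constraint carefully to pin down that the relevant block matrices have size $c\times c$, recognize the block matrices with linear-combination blocks as Kronecker sums $\sum_i T_i\otimes X_i$, and make sure the quiver semi-invariant FFT is invoked in the needed generality over an arbitrary infinite field $K$ (which is exactly why \cite{DZ,SVd} are cited alongside \cite{DW}).
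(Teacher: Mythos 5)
Your proposal is correct and follows exactly the route the paper takes: the paper's own ``proof'' is simply the observation that $R(n,m)=\SI(\theta(m),(n,n))$ together with the citation of \cite[Corollary 3]{DW}, \cite{DZ}, \cite{SVd}, which your argument unpacks (weight/Euler-form bookkeeping forcing $\underline{\dim}V=(c,(m-1)c)$, and the Schur-complement reduction identifying the Schofield semi-invariants with the $f_T$). You have simply spelled out the details that the paper leaves implicit in its reference to the determinantal FFT for quiver semi-invariants.
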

\subsection{Hilbert's null cone}
Hilbert's null cone ${\mathcal N}={\mathcal N}(n,m)\subseteq V$ is the zero set of all non-constant homogeneous invariants in $R(n,m)$. 
The null cone plays an important role in Geometric Invariant Theory.
\begin{definition} We define the constant $\gamma(n,m)$ as the smallest positive integer $d$
such that the non-constant homogeneous invariants of degree $\leq d$ define the null cone.
\end{definition}
 If $K$ has characteristic 0, then polynomial bounds for $\gamma(n,m)$ imply polynomial bounds for $\beta(n,m)$ (see~\cite{Derksen1}). 
  From the description of the invariants in Theorem~\ref{theo:gens} follows:
 \begin{corollary}
 The following statements are equivalent:
 \begin{enumerate}
\item $X=(X_1,X_2,\dots,X_m)$ does not lie in the null cone ${\mathcal N}(n,m)$;
\item $f_T(X)\neq 0$ for some $T\in \M_{d,d}^m$ with $d\geq 1$.
\end{enumerate}
\end{corollary}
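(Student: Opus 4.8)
The plan is to read this off directly from Theorem~\ref{theo:gens}; once a generating set for $R(n,m)$ is available, the statement is essentially just an unwinding of what it means to lie in the null cone.

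For the implication (2) $\Rightarrow$ (1) there is nothing to do beyond the definitions: if $f_T(X)\neq 0$ for some $T\in \M_{d,d}^m$ with $d\geq 1$, then $f_T$ is a homogeneous invariant of degree $dn\geq 1$, hence a non-constant homogeneous element of $R(n,m)$ that does not vanish at $X$, so $X\notin {\mathcal N}(n,m)$ by the definition of the null cone. For (1) $\Rightarrow$ (2) I would argue by contraposition: assume $f_T(X)=0$ for every $d\geq 1$ and every $T\in\M_{d,d}^m$, and deduce $X\in{\mathcal N}(n,m)$. Let $\mathfrak{m}=\bigoplus_{d>0}R(n,m)_d$ be the irrelevant ideal; by definition $X$ lies in the null cone precisely when $f(X)=0$ for all $f\in\mathfrak{m}$. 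Since the $f_T$ generate $R(n,m)$ as a $K$-algebra by Theorem~\ref{theo:gens} and each $f_T$ has positive degree, they generate $\mathfrak{m}$ as an ideal of $R(n,m)$: writing an arbitrary $f\in\mathfrak{m}$ as a polynomial in the $f_T$ with zero constant term exhibits $f=\sum_T h_T f_T$ with $h_T\in R(n,m)$. Evaluating at $X$ gives $f(X)=\sum_T h_T(X)f_T(X)=0$, so every non-constant homogeneous invariant vanishes at $X$ and $X\in{\mathcal N}(n,m)$.

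There is no substantive obstacle here; the only point that deserves a moment's care is the passage from algebra generation to ideal generation, namely that a generating set of $R(n,m)$ consisting of positive-degree homogeneous elements automatically generates the irrelevant ideal $\mathfrak{m}$, so that the vanishing of all the $f_T$ at $X$ forces the vanishing of every non-constant homogeneous invariant. Everything else is definitional.
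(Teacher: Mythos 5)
Your proof is correct and matches the paper's approach: the paper simply states that the corollary follows from Theorem~\ref{theo:gens}, and your argument is precisely the unwinding of that implication. The one step you flag — that positive-degree algebra generators of $R(n,m)$ generate the irrelevant ideal, so that the common vanishing of the $f_T$ at $X$ forces the vanishing of every non-constant homogeneous invariant — is handled correctly.
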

\begin{definition}
Let $\delta(n,m)$ be the smallest positive integer $k$ such that $X\not\in {\mathcal N}(n,m)$ implies that there exists an integer $d$ with $1\leq d\leq k$
and an $m$-tuple $T=(T_1,\dots,T_m)\in \M_{d,d}^m$ of $d\times d$ matrices, such that $f_T(X)\neq 0$.
Since the $f_T$'s generate the invariant ring by Theorem~\ref{theo:gens}, it is clear that $\gamma(n,m)=n\delta(n,m)$.
\end{definition}

\begin{theorem}\label{main}
If $n\geq 2$, $X=(X_1,\dots,X_m)\notin {\mathcal N}(n,m)$ and $d\geq n-1$, then there exists an $m$-tuple $T\in \M_{d,d}^m$ such that $f_T(X)\neq 0$. In particular $\delta(n,m)\leq n-1$ and $\gamma(n,m)\leq n(n-1)$.
\end{theorem}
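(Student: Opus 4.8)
The plan is to prove the contrapositive in the following sharpened form: if $X = (X_1, \dots, X_m)$ is such that $f_T(X) = 0$ for \emph{all} $T \in \M_{d,d}^m$, then $X \in \mathcal{N}(n,m)$ — but actually we want the stronger statement that a \emph{single} well-chosen $d$ with $d \geq n-1$ suffices. The natural language here is the \emph{non-commutative rank}: for $X = (X_1, \dots, X_m)$, regard $X$ as a linear matrix $\sum_i X_i t_i$ over the free skew field, or equivalently look at the ``blow-ups'' $X^{[d]} := \sum_i X_i \otimes T_i$ ranging over $T \in \M_{d,d}^m$. The key known fact (this is essentially the Derksen--Weyman / Schofield--van den Bergh description combined with the theory of semi-invariants for $\theta(m)$) is that $X \in \mathcal{N}(n,m)$ if and only if $\ncrk(X) < n$, i.e. $X$ drops rank after \emph{every} blow-up. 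So the content of Theorem~\ref{main} is: if $\ncrk(X) = n$ (full), then already some blow-up of size exactly $d$, for any fixed $d \geq n-1$, achieves full rank $dn$. Equivalently, the ``regularity'' of a linear matrix is witnessed at blow-up size $n-1$.

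First I would invoke the regularity lemma of Ivanyos, Qiao and Subrahmanyam: if a linear matrix $X$ (of $n\times n$ matrices) has a blow-up of size $d$ that is \emph{not} of full rank, but $\ncrk(X) = n$, then there is a blow-up of size $d$ whose rank is at least $dn - (\text{something controlled})$; more precisely the rank deficiency of an optimal blow-up of size $d$ is a non-increasing, in fact strictly decreasing until it hits $0$, function of $d$ in a way that forces $d = n-1$ to already give rank $d n$. Concretely, the strategy is: let $c(d)$ denote the maximal corank (i.e. $dn$ minus the maximal rank) of a blow-up of size $d$. One shows (a) $c$ is subadditive: $c(d + d') \leq c(d) + c(d')$, using that blow-ups can be combined via block-diagonal/tensor tricks (this is the concavity property of tensor blow-ups alluded to in the abstract); (b) if $X \notin \mathcal{N}$, i.e. $\ncrk(X) = n$, then $c(d) < dn$ for $d \gg 0$, and in fact $c(d)/d \to 0$; (c) the regularity lemma upgrades this to: the corank drops by at least $1$ each time we can still improve, so $c(d) \le \max(0, c(1) - (d-1) \cdot 1)$ or a similar linear decay. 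Since $c(1) \leq n-1$ (because $X$ being in the null cone at blow-up size $1$ would mean $\det(\sum X_i t_i) \equiv 0$, and the corank of an $n\times n$ matrix is at most $n$; a bit of care gives $c(1) \le n-1$ precisely when $X$ is not already detected at $d=1$), the linear decay forces $c(d) = 0$ for all $d \geq n-1$.

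The main obstacle I anticipate is establishing the correct ``decrement'' in the regularity lemma and the subadditivity/concavity of coranks of blow-ups with the sharp constant. It is easy to get \emph{some} polynomial bound; getting exactly $n-1$ requires the precise statement that each successful blow-up enlargement by one reduces the corank-per-layer by at least one full unit until it is zero, i.e. that the sequence $c(d)$ is essentially $\max(0, c(1) - (d-1))$ rather than something like $c(1)/d$. This is where the Ivanyos--Qiao--Subrahmanyam regularity lemma does the heavy lifting — it says that if a blow-up of size $d$ has corank $r > 0$ and $X$ has non-commutative rank $n$, then one can pass to a blow-up of size $d$ (or $d+1$, with appropriate bookkeeping) with strictly smaller total corank, and tracking this carefully yields the $n-1$ bound. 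I would also need to handle the base case and the hypothesis $n \geq 2$ separately (for $n = 1$ the statement is false/vacuous since $R(1,m)$ is generated in degree $1$), and to note that once we have a blow-up $T$ of size exactly $d$ with $f_T(X) \neq 0$, the ``in particular'' statements $\delta(n,m) \le n-1$ and $\gamma(n,m) = n\,\delta(n,m) \le n(n-1)$ follow immediately from the definitions.

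Finally, I would assemble the pieces: take $X \notin \mathcal{N}(n,m)$, so by the corollary following Theorem~\ref{theo:gens} there is \emph{some} size $d_0$ and $T_0 \in \M_{d_0,d_0}^m$ with $f_{T_0}(X) \neq 0$; this means $\ncrk(X) = n$. Feed this into the regularity/concavity machinery to conclude that for the fixed target $d \geq n-1$ the maximal rank of a size-$d$ blow-up is the full $dn$, i.e. there exists $T \in \M_{d,d}^m$ with $\det(\sum_i X_i \otimes T_i) \neq 0$, which is exactly $f_T(X) \neq 0$. Reading off $\deg f_T = dn \leq \gamma$ gives $\gamma(n,m) \le n(n-1)$, completing the proof.
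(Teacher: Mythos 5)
Your plan has a genuine gap and does not match the structure of the paper's proof. Two issues are fatal.

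First, the ``linear decay'' $c(d)\leq\max(0,\,c(1)-(d-1))$ that your argument hinges on is not what the Ivanyos--Qiao--Subrahmanyam regularity lemma (Proposition~\ref{regularity}) says. That lemma only asserts that $\rank(\mathcal{X}^{\{d\}})$ is divisible by $d$; it says nothing about how the corank evolves as $d$ grows. Your step (c) is an unproved and, as far as the tools developed in the paper go, unobtainable claim. You yourself flag this as ``the main obstacle'', but then rely on it anyway.

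Second, the subadditivity in your step (a) points in the wrong direction. You have $c(a+b)\leq c(a)+c(b)$, which lets you control coranks of \emph{larger} blow-ups by smaller ones. But what you need is the reverse: you know $c(d_0)=0$ for \emph{some} $d_0$ (possibly much larger than $n-1$) and must deduce $c(n-1)=0$. Subadditivity gives $0=c(d_0)\leq c(n-1)+c(d_0-n+1)$, which is vacuous. So even granting (a), (b), (c) as stated, your assembly step does not close.

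The paper's actual proof has exactly the opposite shape: a \emph{downward} induction. Proposition~\ref{red} shows that if $d+1\geq n$ and $r(d+1,d+1)=n(d+1)$, then already $r(d,d)=nd$. Iterating this pushes full rank down from any large witness $d_0$ to $d=n-1$. The ingredients are the regularity lemma, the concavity of $r(p,q)$ \emph{in each variable separately} (Lemma~\ref{technical} and Corollary~\ref{smart}, which involve non-square blow-ups $r(p,q)$ with $p\neq q$ — something your diagonal-only bookkeeping never considers), and a separate treatment of the degenerate case $r(1,1)=1$ via Lemma~\ref{r11}. Concretely, the paper passes through the rectangular values $r(d+1,a)$, $r(1,d)$, $r(d+1,d)$ to squeeze $r(d,d)$ from below, and then uses divisibility by $d$ to round up to $nd$. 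Your proposal misses this detour through rectangular blow-ups entirely, and that detour is where the sharp $n-1$ bound comes from.
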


\begin{lemma}  \label{gamman}
The function  $\delta(n,m)$ is a weakly increasing function of $m$, and
for $m>n^2$ we have $\delta(n,m)=\delta(n,n^2)$.
\end{lemma}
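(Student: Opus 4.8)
The plan is to obtain both assertions from one device: a linear change of coordinates on the ``color space'' $K^m$ that indexes the matrices $X_1,\dots,X_m$. Given $X=(X_1,\dots,X_m)\in\M_{n,n}^m$ and a linear map represented by an $m\times m'$ scalar matrix $(\phi_{ij})$, let $X\phi\in\M_{n,n}^{m'}$ be the tuple with $(X\phi)_j=\sum_i\phi_{ij}X_i$. Then for any $T=(T_1,\dots,T_{m'})\in\M_{d,d}^{m'}$ a direct rearrangement of the Kronecker sum gives
$$
f_T(X\phi)=\det\Bigl(\sum\nolimits_j (X\phi)_j\otimes T_j\Bigr)=\det\Bigl(\sum\nolimits_i X_i\otimes\bigl(\sum\nolimits_j\phi_{ij}T_j\bigr)\Bigr)=f_{T\phi^{t}}(X),
$$
where $T\phi^{t}\in\M_{d,d}^m$ is again a tuple of $d\times d$ matrices. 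Since, by Theorem~\ref{theo:gens} and the corollary after it, a tuple lies outside the null cone exactly when some $f_T$ is nonzero on it, the formula yields two facts I will use: (i) if $X=X'\phi$ for some $X'$, then $X'\in{\mathcal N}$ forces $X\in{\mathcal N}$; (ii) if moreover $X'=X\psi$ for some $\psi$ as well — equivalently $X$ and $X'$ have the same linear span in $\M_{n,n}$ — then $X\in{\mathcal N}\iff X'\in{\mathcal N}$, and any certificate $f_S(X')\neq0$ of size $d$ transports to $f_{S\psi^{t}}(X)\neq0$, still of size $d$ (and conversely).

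For monotonicity in $m$: take $X\notin{\mathcal N}(n,m)$ and set $X'=(X_1,\dots,X_m,0)\in\M_{n,n}^{m+1}$. Padding with a zero matrix and deleting it are mutually inverse color maps, so $X$ and $X'$ factor through each other; by (ii), $X'\notin{\mathcal N}(n,m+1)$. Choosing $T'\in\M_{d,d}^{m+1}$ with $d\leq\delta(n,m+1)$ and $f_{T'}(X')\neq0$, and deleting its last entry, produces $T\in\M_{d,d}^m$ with $f_T(X)\neq0$. Hence $\delta(n,m)\leq\delta(n,m+1)$.

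For the stabilization $\delta(n,m)=\delta(n,n^2)$ with $m>n^2$: one inequality is monotonicity. For the other, take $X\notin{\mathcal N}(n,m)$. Since $\dim\M_{n,n}=n^2$, the span of $X_1,\dots,X_m$ has dimension $r\leq n^2$; pick a basis $Y_1,\dots,Y_r$ of it and set $X''=(Y_1,\dots,Y_r,0,\dots,0)\in\M_{n,n}^{n^2}$. The tuples $X$ and $X''$ have the same span, so each is a matrix of scalar linear combinations of the other, i.e.\ they factor through each other on the color side; by (ii), $X''\notin{\mathcal N}(n,n^2)$. Thus some $T''\in\M_{d,d}^{n^2}$ with $d\leq\delta(n,n^2)$ has $f_{T''}(X'')\neq0$, and transporting $T''$ back (along the matrix expressing each $Y_k$ in terms of the $X_i$) produces an element of $\M_{d,d}^m$, still with $d\leq\delta(n,n^2)$, nonzero on $X$. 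Hence $\delta(n,m)\leq\delta(n,n^2)$, and equality follows.

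There is no deep obstacle here; the single point that must not be glossed over is that a color change of coordinates only replaces the $T_j$ by \emph{scalar} linear combinations $\sum_j\phi_{ij}T_j$, so the matrix size $d$ never grows — this is precisely why the bound $\delta(n,n^2)$ propagates to every larger $m$. Conceptually this is the null-cone shadow of Weyl's polarization theorem: $\GL_m$ acts on $\M_{n,n}^m$ commuting with $\SL$, so ${\mathcal N}$ is $\GL_m$-stable, and $n^2$ colors already suffice to realize every possible span. One should also be sure to invoke the description of ${\mathcal N}$ via the $f_T$ (the corollary to Theorem~\ref{theo:gens}), since it is exactly this description that makes ``factoring through each other'' control null-cone membership.
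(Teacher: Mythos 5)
Your argument is correct, and it proves the lemma by essentially the same underlying observation the paper uses: the null-cone membership of $X$, and the smallest $d$ for which some $f_T$ of block size $d$ is nonzero at $X$, depend only on the linear span $\mathcal{X}$ of $X_1,\dots,X_m$. The paper reaches this through Lemma~\ref{fAexists}, phrasing things in terms of $\rank(\mathcal{X}^{\{d\}})=dn$, which is manifestly a function of the span alone; you instead make the span-dependence explicit via the transformation identity $f_T(X\phi)=f_{T\phi^{t}}(X)$ under a ``color'' change of coordinates $\phi\in\M_{m,m'}$. Your packaging is a touch more self-contained (it does not route through tensor blow-up ranks at all) and it has the pleasant side benefit of exhibiting the $\GL_m$-equivariance of the family $\{f_T\}$ and hence the $\GL_m$-stability of $\mathcal{N}(n,m)$ explicitly, which the paper leaves implicit. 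The one thing you correctly flagged as the crux — that replacing each $T_j$ by scalar combinations $\sum_j\phi_{ij}T_j$ does not increase the block size $d$ — is exactly the content that the paper extracts from Lemma~\ref{fAexists}. Both routes are sound, and the steps (pad by zero for monotonicity; replace by a basis of the span for stabilization at $m=n^2$) are identical.
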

Let us define $\delta(n)=\max_m \delta(n,m)=\delta(n,n^2)$ and $\gamma(n)=\gamma(n,n^2)=n\delta(n)$.
We prove a  lower bound on $\delta(n)$ which indicates  that the upper bound we find in Theorem~\ref{main} is quite strong.

\begin{theorem} \label{lower bounds}
We have $\delta(n) \geq  \lfloor\sqrt{n+1}\rfloor$ and $\gamma(n)\geq n\lfloor\sqrt{n+1}\rfloor$.
\end{theorem}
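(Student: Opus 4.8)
Since $\gamma(n)=n\,\delta(n)$ by definition, it suffices to prove $\delta(n)\geq r$ where $r:=\lfloor\sqrt{n+1}\rfloor$ (so $r^2-1\leq n$). For a subspace ${\mathcal W}\subseteq\M_{n,n}$ and $d\geq 1$ write $c_{\mathcal W}(d)$ for the maximal rank of an element of the $d$-th blow-up ${\mathcal W}\otimes\M_{d,d}=\{\sum_i X_i\otimes T_i:T_i\in\M_{d,d}\}\subseteq\M_{dn,dn}$; by the Corollary after Theorem~\ref{theo:gens}, a point $X$ spanning ${\mathcal W}$ lies outside ${\mathcal N}(n,m)$ precisely when $c_{\mathcal W}(d)=dn$ for some $d$. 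Using Lemma~\ref{gamman} ($\delta(n,\cdot)$ is weakly increasing and $\delta(n)=\delta(n,n^2)$), to get $\delta(n)\geq r$ it is enough to exhibit, for some $m\leq n^2$, a subspace ${\mathcal W}\subseteq\M_{n,n}$ with $c_{\mathcal W}(d)<dn$ for all $d\leq r-1$ but $c_{\mathcal W}(d)=dn$ for some $d$. Moreover it suffices to do this when $n=r^2-1$: if ${\mathcal W}_0\subseteq\M_{r^2-1}$ works for $r$, then ${\mathcal W}_0\oplus\langle I_{n-r^2+1}\rangle\subseteq\M_n$ works for any $n\geq r^2-1$, since non-commutative rank is additive under block-diagonal direct sums and an element of the blow-up of the direct sum is invertible only if the corresponding element of ${\mathcal W}_0\otimes\M_{d,d}$ is.

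For $n=r^2-1=\dim\mathfrak{sl}_r$ the plan is to take ${\mathcal W}=\operatorname{ad}(\mathfrak{sl}_r)$, the image of the adjoint representation regarded as an $(r^2-1)$-dimensional space of $(r^2-1)\times(r^2-1)$ matrices acting on $\mathfrak{sl}_r$, so $m=r^2-1\leq n^2$; for $r=2$ this is, up to a change of basis, the space $\mathfrak{so}_3$ of skew-symmetric $3\times3$ matrices, the classical example with $\delta=2$, so this is the natural generalization. One then has to show: (a) $c_{\mathcal W}(r)=(r^2-1)r$, so that ${\mathcal W}$ is not in the null cone; and (b) $c_{\mathcal W}(d)<(r^2-1)d$ for $1\leq d\leq r-1$, i.e.\ $\sum_i\operatorname{ad}_{A_i}\otimes T_i$ is singular for every basis $A_1,\dots,A_{r^2-1}$ of $\mathfrak{sl}_r$ and all $T_i\in\M_{d,d}$. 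For (a) I would use the fact that the non-commutative rank of ${\mathcal W}\subseteq\M_{n,n}$ equals $n$ if and only if $\dim\!\big(\sum_{W\in{\mathcal W}}W(U)\big)\geq\dim U$ for every subspace $U\subseteq\C^n$; here $\sum_W W(U)=[\mathfrak{sl}_r,U]$, so (a) reduces to the Lie-theoretic inequality $\dim[\mathfrak{sl}_r,U]\geq\dim U$ for all $U\subseteq\mathfrak{sl}_r$, which follows from the structure theory of $\mathfrak{sl}_r$ (and is checked by hand for $\mathfrak{so}_3$). Alternatively one verifies (a) by an explicit blow-up of size $r$: with $(A_i),(A^i)$ dual bases for the trace form and $\rho$ the standard representation, $\sum_i\operatorname{ad}(A_i)\otimes\rho(A^i)=\tfrac12\big(C_{\operatorname{ad}\otimes\rho}-C_{\operatorname{ad}}-C_{\rho}\big)$ with $C$ the relevant Casimir operators, which acts by a nonzero scalar on each irreducible constituent of $\mathfrak{sl}_r\otimes\C^r$ (one checks no constituent has Casimir value $c_{\operatorname{ad}}+c_\rho$) and so is invertible.

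The main obstacle is (b). A generic semisimple $\operatorname{ad}_h$ has kernel equal to the Cartan subalgebra, of dimension $r-1$, so the blow-up deficiency $(r^2-1)d-c_{\mathcal W}(d)$ equals $r-1$ at $d=1$; the crux is to show it stays strictly positive for $d\leq r-1$ — in fact I expect it to equal exactly $r-d$ — rather than collapsing faster, which for other matrix spaces it can do (e.g.\ ${\mathcal W}_0\oplus{\mathcal W}_0$ already halves the deficiency in one step). To prove this one would either exhibit, for each $d\leq r-1$, an explicit shrunk subspace $P\subsetneq\mathfrak{sl}_r\otimes\C^d$ with $(\operatorname{ad}_A\otimes T)P\subseteq Q$ for all $A\in\mathfrak{sl}_r$, $T\in\M_{d,d}$ and $\dim P>\dim Q$ — such a $P$ must genuinely entangle the root-space grading of $\mathfrak{sl}_r$ with the $\C^d$-factor, since a grading-respecting choice cannot work — or argue via the regularity lemma of Ivanyos, Qiao and Subrahmanyam, bounding below the length of the (second) Wong sequence of the pair $(\operatorname{ad}(\mathfrak{sl}_r),\operatorname{ad}_h)$ and deducing that the $d$-th blow-up cannot reach full rank before $d=r$. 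Granting (a) and (b), the example yields $\delta(r^2-1)\geq r$; the padding reduction of the first paragraph then gives $\delta(n)\geq r=\lfloor\sqrt{n+1}\rfloor$ for all $n$, whence $\gamma(n)=n\,\delta(n)\geq n\lfloor\sqrt{n+1}\rfloor$.
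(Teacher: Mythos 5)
Your overall strategy is plausible but has a genuine gap at exactly the step you label the ``main obstacle'': part (b), the claim that the $d$-th tensor blow-up of ${\mathcal W}=\operatorname{ad}(\mathfrak{sl}_r)\subseteq\M_{r^2-1,r^2-1}$ fails to reach full rank for all $1\leq d\leq r-1$. You state two possible routes to (b)---exhibiting an explicit shrunk subspace of $\mathfrak{sl}_r\otimes K^d$, or controlling Wong sequences---but carry out neither, and you explicitly say you merely ``expect'' the deficiency to be $r-d$. This is the entire content of the lower bound: without (b) the example proves nothing (indeed (a) alone only shows ${\mathcal W}$ is outside the null cone, which one already knows for many subspaces). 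The Fortin--Reutenauer criterion you cite characterizes when the non-commutative rank is full, i.e.\ when \emph{some} blow-up is full rank; it says nothing about \emph{which} $d$ suffices, which is precisely what (b) is about, and no analogue of the criterion for fixed blow-up level $d$ is given. Your part (a) is also only sketched (the inequality $\dim[\mathfrak{sl}_r,U]\geq\dim U$ for every subspace $U\subseteq\mathfrak{sl}_r$ is asserted from ``structure theory'' without proof, and the Casimir alternative is likewise unverified), though it is less central since (a) alone is not what makes the example useful. The padding reduction in your first paragraph is correct and so is the translation via Lemma~\ref{gamman}, but those are the easy parts.

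For comparison, the paper proves (b)-type failure by a different, explicit mechanism: it builds a block matrix $N_d$ out of $A^iB_j$ so that for any substitution by $k\times k$ matrices with $k<d$, the Cayley--Hamilton relation for $A$ produces a visible linear dependence among the block columns, forcing singularity; and it shows (a)-type success by choosing $A$ diagonal with distinct eigenvalues and $B_j$ powers of the long-cycle permutation, after which row and column permutations turn $N_d$ into a direct sum of Vandermonde blocks. It then linearizes $N_d$ to a linear matrix $F_d$ of size $d^2-1$ via standard Schur-complement block operations, giving $\delta(d^2-1,d+1)\geq d$ directly. That argument is entirely elementary and self-contained, whereas your route would require a nontrivial new Lie-theoretic input (a classification-type statement about shrunk subspaces of blow-ups of $\operatorname{ad}(\mathfrak{sl}_r)$) that you have not supplied. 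If you want to pursue your approach, the thing to actually prove is (b); everything else in your write-up is routine once (b) is in hand.
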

%\subsection{non-commutative rank}
%We recall some of the definitions and useful results from \cite{FR} and \cite{HW}, which are formulated in terms of non-commutative rank. 
%If $L$ is a skew field, and $A\in \M_{p,q}(L)$ then the image of $\im(A)$ as an $L$-module homomorphism $L^q\to L^p$ is a free submodule of $L^p$.
%The non-commutative rank $\ncrk(A)$ of $A$ is defined as the rank of the $L$-module $\im(A)$. A matrix $A\in \M_{n,n}(L)$ is invertible if and only if its non-commutative rank is $n$.
\subsection{Degree bounds for rings of quiver semi-invariants}
For details and notational conventions we refer to Section~\ref{sec:Quivers}.
To a quiver $Q$ with vertex set $Q_0$, and a dimension vector $\alpha\in \N^{Q_0}$ one can associate a ring  $\SI(Q,\alpha)$
of semi-invariants. This ring is graded by weights $\sigma\in \Z^{Q_0}$, so we have a decomposition
$\SI(Q,\alpha)=\bigoplus_{\sigma\in \N^{Q_0}} \SI(Q,\alpha)_{\sigma}$. For a given weight $\sigma$, we can consider
the subring $\SI(Q,\alpha,\sigma)=\bigoplus_{d=0}^\infty \SI(Q,\alpha)_{d\sigma}$. For any weight $\sigma$, the projective variety $\Proj(\SI(Q,\alpha,\sigma))$, if nonempty,  is a moduli space for the $\alpha$-dimensional representations of the quiver $Q$. See \cite{King} for more details.

In Section~\ref{sec:Quivers} we will give polynomial bounds (in terms of $\alpha,\sigma,Q$) for the generators of $\SI(Q,\alpha,\sigma)$. For the generalized Kronecker quiver $\theta(m)$ and dimension vector $(p,q)$ this gives:
\begin{theorem}\label{pq}
If $\kar(K)=0$, then the invariant ring $K[\M_{p,q}^m]^{\Sl_p\times \Sl_q}$ is generated by invariants of degree $\leq (pq\lcm(p,q))^2$.
\end{theorem}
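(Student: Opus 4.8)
The plan is to reduce to the square case (Theorem~\ref{main}) via a tensoring trick and then convert the resulting null cone estimate into a bound on the generators by way of Proposition~\ref{h.s.o.p}. Assume $N:=\lcm(p,q)\geq 2$ (the case $N=1$ forces $p=q=1$, where the ring is a polynomial ring and the bound is immediate). Put $a=N/p$, $b=N/q$, so $pa=qb=N$, and set $m'=mab$. By the First Fundamental Theorem for semi-invariants of the generalized Kronecker quiver (the analogue for dimension vector $(p,q)$ of Theorem~\ref{theo:gens}; see \cite{DW,DZ,SVd}, or the general statement developed in Section~\ref{sec:Quivers}), the ring $R:=\K[\M_{p,q}^m]^{\Sl_p\times\Sl_q}$ is spanned by the invariants $f_T(X)=\det(X_1\otimes T_1+\cdots+X_m\otimes T_m)$ with $T=(T_1,\dots,T_m)\in\M_{ad,bd}^m$ and $d\geq 1$; since $X_i\otimes T_i$ is $Nd\times Nd$, such an $f_T$ is homogeneous of degree $Nd$. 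Fix a $K$-basis $C^{(1)},\dots,C^{(ab)}$ of $\M_{a,b}$ and define $\Phi\colon\M_{p,q}^m\to\M_{N,N}^{m'}$ by $\Phi(X)=(X_i\otimes C^{(j)})_{i,j}$; each entry of $\Phi(X)$ is linear in the entries of $X$, so the comorphism $\Phi^*$ preserves degree. Using the associativity of the Kronecker product one checks, for every $S=(S_{ij})\in\M_{d,d}^{m'}$, the exact identity $f_S(\Phi(X))=f_T(X)$ with $T_i=\sum_j C^{(j)}\otimes S_{ij}$; and since $\M_{ad,bd}\cong\M_{a,b}\otimes\M_{d,d}$, every $T\in\M_{ad,bd}^m$ is obtained in this way. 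Consequently $\Phi^*\colon R(N,m')\to R$ is a surjective, degree-preserving homomorphism, and a tuple $X$ lies in the null cone of $R$ if and only if $\Phi(X)\in{\mathcal N}(N,m')$ (one direction using Theorem~\ref{theo:gens}, the other the First Fundamental Theorem for $(p,q)$).

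Next I would apply Theorem~\ref{main} with $d=N-1$: the null cone ${\mathcal N}(N,m')$ is the common zero locus of the $f_S$ with $S\in\M_{N-1,N-1}^{m'}$, all homogeneous of the single degree $N(N-1)$. Pulling these back through $\Phi^*$, the null cone of $R$ is the common zero locus of the invariants $f_T$ with $T_i\in\M_{a(N-1),b(N-1)}$, again all homogeneous of degree $D:=N(N-1)$. Before invoking the generator bound, I would apply the theorem of Weyl used in the Corollary to Theorem~\ref{deg.bds}: restriction together with polarization shows that the generating degree of $\K[\M_{p,q}^m]^{\Sl_p\times\Sl_q}$ never exceeds that of $\K[\M_{p,q}^{pq}]^{\Sl_p\times\Sl_q}$, so it suffices to prove the theorem when $m\leq pq$; in that range $\dim R\leq\dim\M_{p,q}^m=mpq\leq(pq)^2$.

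Finally, since $\kar(\K)=0$, Proposition~\ref{h.s.o.p} (which bounds the generating degree of $R$ by its Krull dimension times a degree in which its null cone is cut out, using that $R$ is then Cohen--Macaulay with non-positive $a$-invariant) shows that $R$ is generated in degree at most $(\dim R)\cdot D\leq(pq)^2\cdot N(N-1)<(pq\,\lcm(p,q))^2$, which is the assertion. The step I expect to be the main obstacle is the Kronecker bookkeeping in $f_S(\Phi(X))=f_T(X)$: one must order the tensor-factor bases consistently so that $(X_i\otimes C^{(j)})\otimes S_{ij}=X_i\otimes(C^{(j)}\otimes S_{ij})$ holds literally, and verify that $p\cdot ad=q\cdot bd=Nd$ makes each determinant square; granting this together with the surjectivity of $\Phi^*$, the theorem follows by combining Theorem~\ref{main}, Weyl's theorem, and Proposition~\ref{h.s.o.p}. (Alternatively, one can obtain this statement by specializing the general polynomial degree bounds for $\SI(Q,\alpha,\sigma)$ of Section~\ref{sec:Quivers} to $Q=\theta(m)$, $\alpha=(p,q)$, and the primitive weight $\sigma=(q,-p)/\gcd(p,q)$.)
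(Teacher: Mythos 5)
Your proposal is correct and is essentially the same proof as the paper's, just presented more explicitly. The paper works in the general framework of Section~\ref{sec:Quivers}: for the Kronecker quiver $\theta(m)$ with dimension vector $(p,q)$ and the primitive weight $\sigma=(q',-p')$ (with $p'=p/\gcd(p,q)$, $q'=q/\gcd(p,q)$, so $n=\lcm(p,q)$), there is a surjective degree-scaling homomorphism $\psi\colon K[\M_{n,n}^{m'}]^{\Sl_n\times\Sl_n}\to\SI(Q,\alpha)$, and Corollary~\ref{existssemi} (a pullback of Theorem~\ref{main} along $\psi$) shows the null cone is cut out in degree $\leq\lcm(p,q)^2$; then Noether normalization, the bound $\dim R\leq mpq$, Proposition~\ref{Popov}, and Weyl's theorem give $(pq\lcm(p,q))^2$. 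Your $\Phi\colon X\mapsto(X_i\otimes C^{(j)})_{i,j}$ is precisely that $\psi$ made concrete via Kronecker bookkeeping, and the equivariance computation you note, $(AX_iB^{-1})\otimes C^{(j)}=(A\otimes I_a)(X_i\otimes C^{(j)})(B\otimes I_b)^{-1}$ with $\det(A\otimes I_a)=\det(A)^a=1$, is exactly why $\Phi^*$ lands in $R$. You get $D=N(N-1)$ rather than the paper's slightly lazier $N^2$, but both give the stated bound. Two cosmetic points: the proposition you cite should be Proposition~\ref{Popov} (there is no result labeled \texttt{h.s.o.p} in the paper), and its proof indeed rests on Cohen--Macaulayness of $R$ plus a bound on the degree of the Hilbert-series numerator, as you say. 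Otherwise the argument is complete and matches the paper's.
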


\subsection{Applications to Algebraic Complexity Theory}
The polynomial degree bound has some interesting applications in Algebraic Complexity Theory. Some applications are related to free skew fields.  
Suppose that $X=(X_1,X_2,\dots,X_m)\in \M_{n,n}^m$ and consider the free skew field $L=K\llangle t_1,t_2,\dots,t_m\rrangle$ generated by $t_1,t_2,\dots,t_m$ (see \cite{Cohn2}.
There is a useful criterion to test invertibility over the skew field (take $Q_0=0$ in \cite[Proposition~7.3]{HW}):

\begin{proposition} \label{inv.skew}
The matrix $A=t_1X_1+t_2X_2+\cdots+t_mX_m\in \M_{n,n}(L)$ is invertible, if and only if there exists a nonnegative integer $d$ and matrices
$T_1,T_2,\dots,T_m\in \M_{d,d}(K)$ such that $X_1\otimes T_1+X_2\otimes T_2+\cdots+X_m\otimes T_m$ is invertible.
\end{proposition}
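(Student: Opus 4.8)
The plan is to route everything through the notion of \emph{fullness} of the linear matrix $A=\sum_{i=1}^m t_iX_i$ over the free algebra $K\langle t_1,\dots,t_m\rangle$. By Cohn's theory of the universal skew field of fractions of the free algebra, $A$ is invertible over $L=K\llangle t_1,\dots,t_m\rrangle$ if and only if $A$ is full, and for a \emph{linear} matrix this is equivalent in turn to the non-commutative rank $\ncrk(\mathcal X)$ of the matrix space $\mathcal X=\langle X_1,\dots,X_m\rangle\subseteq\M_{n,n}$ being equal to $n$; these equivalences go back to Cohn, Cohn--Reutenauer, Fortin--Reutenauer and Ivanyos--Qiao--Subrahmanyam, and are precisely what is recorded in \cite[Proposition~7.3]{HW} (take $Q_0=0$). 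Granting this, I would prove the implication ``$\Leftarrow$'' by contraposition. If $A$ is not invertible over $L$ then $\ncrk(\mathcal X)<n$, so there is a subspace $U\subseteq K^n$ with $\dim U>\dim(\sum_i X_iU)$; equivalently, after multiplying all the $X_i$ simultaneously on the left and right by suitable invertible matrices over $K$ (which changes each blow-up by invertible constant factors and so does not affect invertibility), the $X_i$ share a zero block occupying $r:=n-\dim(\sum_iX_iU)$ rows and $s:=\dim U$ columns with $r+s>n$. Then for every $d$ and every $T_1,\dots,T_m\in\M_{d,d}(K)$ the matrix $\sum_iX_i\otimes T_i$ has a zero block of $rd$ rows and $sd$ columns; those $rd$ rows, restricted to the remaining $nd-sd<rd$ columns, are linearly dependent, and they vanish on the other $sd$ columns, so they are linearly dependent rows of $\sum_iX_i\otimes T_i$ and $f_T(X)=\det(\sum_iX_i\otimes T_i)=0$. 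Hence no blow-up of $X$ is invertible.

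For ``$\Rightarrow$'' I would first show that $\ncrk(\mathcal X)=n$ forces $X=(X_1,\dots,X_m)\notin\mathcal N(n,m)$, via Hilbert--Mumford (equivalently, King semistability for the Kronecker quiver $\theta(m)$ with dimension vector $(n,n)$): if $X$ lay in the null cone there would be a one-parameter subgroup $\bigl(\operatorname{diag}(t^{a_1},\dots,t^{a_n}),\operatorname{diag}(t^{b_1},\dots,t^{b_n})\bigr)$ of $\SL$ with $\sum_ia_i=\sum_jb_j=0$ degenerating $X$ to $0$, hence with $a_i>b_j$ whenever some $X_k$ has a nonzero $(i,j)$-entry; a short counting argument using $\sum_ia_i=\sum_jb_j$ supplies a threshold $v$ with $\#\{j:b_j\ge v\}>\#\{i:a_i>v\}$, and then $U=\langle e_j:b_j\ge v\rangle$, $W=\langle e_i:a_i>v\rangle$ satisfy $\sum_iX_iU\subseteq W$ and $\dim U>\dim W$, contradicting $\ncrk(\mathcal X)=n$. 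Since the $f_T$ generate $R(n,m)$ (Theorem~\ref{theo:gens}), $\mathcal N(n,m)$ is their common zero locus, so $X\notin\mathcal N(n,m)$ gives a tuple $T$ with $f_T(X)=\det(\sum_iX_i\otimes T_i)\neq 0$; Theorem~\ref{main} even allows $d=n-1$. Chaining the two implications gives the equivalence.

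A quiver-free alternative for ``$\Rightarrow$'' bypasses the null cone: if $Y_1,\dots,Y_m$ are $m$ generic $d\times d$ matrices (entries independent commuting indeterminates $\xi$), then by Cohn's theory $A$ is full iff its image $\sum_iX_i\otimes Y_i$ under $t_i\mapsto Y_i$ is invertible over $\M_{d,d}(K(\xi))$ for some $d$, and $\det(\sum_iX_i\otimes Y_i)$ is a polynomial in the $\xi$ that is nonzero over $K(\xi)$ iff it is nonzero at some specialization $Y_i\mapsto T_i\in\M_{d,d}(K)$ (using that $K$ is infinite) --- which is exactly the right-hand side of the proposition.

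The main obstacle is not internal to this paper: it is the correct invocation of the non-commutative background --- ``invertible over $L$ $\Leftrightarrow$ full'', ``full linear matrix $\Leftrightarrow$ $\ncrk(\mathcal X)=n$'', and ``$\ncrk(\mathcal X)=\sup_d\tfrac1d\rank(\sum_iX_i\otimes T_i)$'' --- which is classical and is packaged in \cite[Proposition~7.3]{HW}. The only genuinely new contribution needed here is Theorem~\ref{main}: it makes the equivalence effective, bounding the blow-up size $d$ on the right-hand side by $n-1$, even though the unquantified existence statement in the proposition does not by itself require that refinement.
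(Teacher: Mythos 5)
The paper does not prove Proposition~\ref{inv.skew} at all; it is stated as a quoted result and attributed to \cite[Proposition~7.3]{HW} (with the hint ``take $Q_0=0$''). So the ``comparison'' is really between your attempted proof and the paper's decision to treat this as background. You correctly diagnose this in your last paragraph: once one grants the package ``invertible over $L \Leftrightarrow$ full'', ``full linear matrix $\Leftrightarrow \ncrk(\mathcal{X}) = n$'', and the blow-up characterization of $\ncrk$, the proposition is immediate, and this package is precisely what \cite{HW} records. Your argument is therefore a re-derivation of classical material rather than an alternative to anything in this paper.

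Of your two routes for the ``$\Rightarrow$'' direction, the second (generic $d\times d$ matrices over $K(\xi)$, then specialize using that $K$ is infinite) is the standard one and is clean. The first route, via Hilbert--Mumford/King, has a subtlety you should flag: the statement is over an arbitrary infinite field $K$, but the hard direction of Hilbert--Mumford (a point in the null cone is destabilized by a one-parameter subgroup) is a theorem over an algebraically closed field. To make this route go through over $K$ you would have to base-change to $\bar K$, which then requires knowing that $\ncrk_K(\mathcal{X}) = n$ implies $\ncrk_{\bar K}(\mathcal{X}) = n$; since $\ncrk$ is a minimum over subspaces and $\bar K$ has more of them, this is not automatic and needs an argument (it is true, but proving it directly essentially uses the blow-up characterization you are trying to establish). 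Your second route sidesteps this entirely, so I would present that one. The counting argument in the first route (producing a threshold $v$ with $\#\{j:b_j\ge v\}>\#\{i:a_i>v\}$) is correct but not quite as ``short'' as stated: if $\#\{j:b_j\ge v\}\le\#\{i:a_i>v\}$ for all $v$, integration over $v$ together with $\sum a_i=\sum b_j$ forces the two step functions to agree almost everywhere, hence $\{a_i\}=\{b_j\}$ as multisets, and then $v=\max a_i$ works; this deserves a sentence. The ``$\Leftarrow$'' direction via the common zero block of size $rd\times sd$ with $r+s>n$ is correct, and here the shrinking subspace can be taken over $K$ because Cohn's hollow-matrix characterization of non-full linear matrices works over the base field.
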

Various problems in Algebraic Complexity Theory can be reduced to testing whether some linear matrix $A$ is invertible. For this reason, 
Problem 4 in \cite{HW} asks for an upper bound for $\delta(n)$. The polynomial bound for $\delta(n)$ gives us a randomized polynomial time algorithm for determining whether  the linear matrix $A=\sum_{i=1}^m t_i X_i \in \M_{n,n}(L)$ is invertible for infinite fields of arbitrary characteristic.
For $K=\Q$ it  was shown by Garg, Gurvits, Oliviera and Widgerson in \cite{GGOW}  that Gurvits' algorithm in \cite{Gurvits}  can decide invertibility of $A$ in deterministic polynomial time polynomial over $\Q$, without using a polynomial bound on $\delta(n)$ (a weaker bound suffices). 
A similar result can be obtained by combining the results from \cite{IQS} with our polynomial bound for $\delta(n)$.
In Section~\ref{act} we will discuss in more detail, the following consequences from the polynomial bound.
\begin{itemize}
\item {\bf Rational identity testing:} Deciding whether a non-commutative formula computes the zero function can be determined in randomized polynomial time, and in deterministic polynomial time when working over the field $\Q$.
\item{\bf Division-free formulas:} Given a non-commutative polynomial of degree $k$ in $m$ variables which has a formula of size $n$ using additions, multiplications and divisions, then there exists a division-free formula of size $n^{O(\log^2(k)\log(n))}$.
\item{\bf Lower bounds on formula size:} Any formula with divisions computing the non-commutative determinant of degree $n$ must have at least sub-exponential size (in $n$).
\end{itemize}

\subsection{Organisation}
In Section~\ref{lin.subs}, we recall the language of linear subspaces and blow ups and prove Theorem~\ref{main}.
 We prove the degree bounds for invariants defining the null cone and for generating invariants in Section~\ref{upper}. In Section~\ref{lower}, we explain a construction that allows to prove  the lower bound in Theorem~\ref{lower bounds}. In Section~\ref{sec:Quivers} we study degree bounds for quiver semi-invariants,
 and generalize the degree bound for matrix invariants to arbitrary rectangular matrices. In Section~\ref{act} we discuss applications to Algebraic Complexity Theory.

\section{Linear subspaces of matrices and blow ups} \label{lin.subs}
Various properties of an $m$-tuple $X=(X_1,X_2,\dots,X_m)\in \M_{n,n}^m$ only depend on the subspace spanned by 
$X_1,\dots,X_m$. In this section we study such subspaces.

\begin{definition}
Let $\mathcal{X}$ be a linear subspace of $\M_{k,n}$. We define $\rank(\mathcal{X})$ to be the maximal rank among its members, $$\rank(\mathcal{X}) = \max \{ \rank(X) |\  X \in \mathcal{X} \}.$$
\end{definition}
%Note that this maximal rank is achieved by matrices in a Zariski open subset $U$ of $\mathcal{X}$. 

We define tensor blow ups of linear subspaces following \cite{IQS}.

\begin{definition}
Let $\mathcal{X}$ be a linear subspace of $\M_{k,n}$. We define its $(p,q)$ tensor blow up $\mathcal{X}^{\{p,q\}}$ to be $$\mathcal{X} \otimes \M_{p,q} = \Big\{ \sum_{i} X_i \otimes T_i |\  X_i \in \mathcal{X}, T_i \in \M_{p,q} \Big\}$$ viewed as a linear subspace of $\M_{kp,nq}$. We will write ${\mathcal X}^{\{d\}}={\mathcal X}^{\{d,d\}}$.
\end{definition}

%\begin{lemma}
%We have
%$$
%\ncrk(t_1X_1+\cdots+t_mX_m)=\max \Big\{\frac{\rank({\mathcal X}^{\{d\}})}{d}\Big| d\in \Z_{>0}\Big\}.
%$$
%\end{lemma}
%\begin{proof}
%Let $r=\ncrk(t_1X_1+\cdots+t_mX_m)$.
%The matrix $t_1X_1+\cdots+t_mX_m$ has an $r\times r$ submatrix that is invertible. 
%By, there exists a $d\in \Z_{>0}$ such that ${\mathcal X}^{\{d\}}$ has an element for which the corresponding sub-block is invertible,
%so $\rank({\mathcal X}^{\{d\} })\geq rd$. 
%
%On the other hand, there exists invertible matrices $U,V\in \M_{n,n}(K)$ and integers nonnegative integers $i$ and $j$ with $i+j=r$ such
%such that 
%$U{\mathcal X}V$ is contained in the set of all block matrices of the form
%$$
%\begin{pmatrix}
%A & B\\
%0 & C
%\end{pmatrix}
%$$
%where $B$ is an $i\times j$ matrix.
%Replacing all the entries of such a matrix by $d\times d$ matrices yields a matrix that has rank at most $di+dj=dr$
%So $\rank({\mathcal X}^{(d)})\leq dr$.
%\end{proof}
%It is clear from the Lemma that $\ncrk(t_1X_1+\cdots+t_mX_m)$ only depends on the span ${\mathcal X}$ of $X_1,\dots,X_m$ so we may also denote this number by $\ncrk({\mathcal X})$.

 In \cite{IQS}, Ivanyos, Qiao and Subrahmanyam prove a regularity lemma (\cite[Lemma~11 and Remark 10]{IQS}) which is crucial for the proof of our main results.
\begin{proposition} [\cite{IQS}] \label{regularity}
If $\mathcal{X}$ is a linear subspace of matrices, then $\rank(\mathcal{X}^{\{d\}})$ is a multiple of $d$.
\end{proposition}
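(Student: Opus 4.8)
The plan is to put $\mathcal{X}^{\{d\}}$ into a transparent form, isolate the feature responsible for divisibility by $d$, and then invoke the regularity argument of \cite{IQS} for the one genuinely delicate step. First I would reformulate: after fixing a basis $e_1,\dots,e_N$ of $\mathcal{X}$ and permuting the rows and columns of the ambient $\M_{kd,nd}$ (which does not change ranks), $\mathcal{X}^{\{d\}}$ becomes exactly the space of $d\times d$ block matrices $M=(M_{ab})_{a,b=1}^d$ all of whose blocks $M_{ab}$ lie in $\mathcal{X}\subseteq\M_{k,n}$; indeed, if $M_{ab}=\sum_\ell\lambda^\ell_{ab}e_\ell$ then $M=\sum_\ell e_\ell\otimes(\lambda^\ell_{ab})\in\mathcal{X}\otimes\M_{d,d}$. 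In this model it is immediate that $\mathcal{X}^{\{d\}}$ is stable under block-row and block-column operations, i.e. (back in the $\mathcal{X}\otimes\M_{d,d}$ picture, with $\GL_d$ acting on the $\M_{d,d}$-factor) under $A\mapsto (I_k\otimes g)\,A\,(I_n\otimes h^{-1})$ for $g,h\in\GL_d$, and these operations preserve the rank.

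The factor $d$ now comes from representation theory. The standard representation $K^d$ of $\GL_d$ is irreducible with $\operatorname{End}_{\GL_d}(K^d)=K$, so $K^n\otimes K^d$ — with $\GL_d$ acting on the second factor — is a direct sum of $n$ copies of a simple module, and hence \emph{every} $\GL_d$-submodule of it has dimension divisible by $d$; likewise for $K^k\otimes K^d$. Viewing $A\in\mathcal{X}^{\{d\}}$ as a linear map $K^n\otimes K^d\to K^k\otimes K^d$, it therefore suffices to produce an $A$ of maximal rank $r=\rank(\mathcal{X}^{\{d\}})$ whose kernel (equivalently, whose image) is a $\GL_d$-submodule: then $nd-r=\dim\ker(A)$, resp. $r=\dim\im(A)$, is a multiple of $d$, and since $d\mid nd$ we conclude $d\mid r$.

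The crux is therefore to manufacture a maximal-rank element with $\GL_d$-invariant kernel and image, and here I would run the regularity argument of Ivanyos--Qiao--Subrahmanyam. Among all maximal-rank $A$ one considers one for which the $\GL_d$-span of $\ker(A)$ is smallest; if this span were still strictly larger than $\ker(A)$, one manipulates $A$ — via a Wong-sequence analysis of $\mathcal{X}$ together with linear combinations $\sum_g c_g\,(I_k\otimes g)\,A\,(I_n\otimes g^{-1})$ of block-conjugates, with generic coefficients $c_g$, tracking the resulting kernels and images — to obtain another maximal-rank element whose invariant span is strictly smaller, a contradiction; for the minimizer, then, $\ker(A)$ is already $\GL_d$-invariant and we finish as above. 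The main obstacle — and the reason this is not a one-line argument — is exactly that maximal rank by itself does \emph{not} make $\ker(A)$ invariant, and the naive fix fails to obviously help: a generic combination of block-conjugates of $A$ again has rank $r$, and while its kernel is forced to contain the \emph{intersection} $\bigcap_g(I_n\otimes g)\ker(A)$, it need not be any closer to invariant than $\ker(A)$ was. Controlling this — showing the invariant span can always be shrunk to the kernel without dropping rank — is the substantive content of the regularity lemma of \cite{IQS}, which I would cite for this step. (Equivalently one may work with the generic element $G=\sum_\ell e_\ell\otimes T_\ell$ over $F=K(t^{(\ell)}_{ab})$: there $\rank_F(G)=r$, the $\GL_d(K)$-action $G\mapsto (I_k\otimes g)G(I_n\otimes g^{-1})$ is realized by $K$-automorphisms $\sigma_g$ of $F$, so $\sigma_g(\ker G)=(I_n\otimes g)\ker(G)$, and the $\GL_d$-span of $\ker(G)$ is simultaneously a $\GL_d$-submodule — of dimension divisible by $d$ — and $\sigma_g$-stable; the same difficulty resurfaces as that of bringing its dimension back down to $nd-r$.)
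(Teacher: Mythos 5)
The paper does not prove this proposition; it is quoted directly from \cite{IQS} (Lemma 11 and Remark 10), so there is no internal proof to compare against — and your proposal, in the end, also defers the decisive step to \cite{IQS}. What you supply is a correct and illuminating reduction, not a proof. The correct parts: the Kronecker-flip identifying $\mathcal{X}^{\{d\}}$ with $d\times d$ block matrices over $\mathcal{X}$; the stability of $\mathcal{X}\otimes\M_{d,d}$ under $A\mapsto (I_k\otimes g)A(I_n\otimes h^{-1})$ for $g,h\in\GL_d$; and the observation that $K^d$ is an absolutely irreducible $\GL_d$-module over any field, so every $\GL_d$-submodule of $(K^d)^{\oplus n}$ has dimension divisible by $d$. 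Given these, a maximal-rank element of $\mathcal{X}^{\{d\}}$ whose kernel is $\GL_d$-invariant would indeed force $d\mid \rank(\mathcal{X}^{\{d\}})$.

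The gap, which you yourself flag, is that you never produce such an element, and the citation you make to cover it does not quite fit: the regularity lemma of \cite{IQS} is literally the divisibility $d\mid\rank(\mathcal{X}^{\{d\}})$, not the a priori stronger claim that some max-rank element has $\GL_d$-invariant kernel. You are therefore quoting \cite{IQS} for a statement they do not explicitly assert. Their proof (a second-Wong-sequence analysis of the blown-up space) does in fact exhibit a max-rank element whose kernel has the form $U\otimes K^d$ and is hence $\GL_d$-invariant, so your framing is compatible with their argument — but confirming that requires unwinding their proof rather than simply invoking it. Your own alternative sketch (minimize the $\GL_d$-span of $\ker A$ over max-rank $A$, derive a contradiction if it strictly exceeds $\ker A$) is only a plan: you announce the contradiction but never derive it, and you concede that generic combinations of block-conjugates do not obviously shrink the invariant span. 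Bottom line: both the paper and your proposal rest on \cite{IQS} for the substance; your reduction is pleasant exposition, but as a proof it has the same status as the paper's bare citation, and you should either cite the regularity lemma verbatim or actually carry out the Wong-sequence argument.
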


Let us fix $X=(X_1,\dots,X_m)\in \M_{n,n}^m$ and let $\mathcal{X}$ be the span of $X_1,\dots,X_m$. The following lemma is clear.
\begin{lemma}\label{fAexists}
Given a positive integer $d$, the following statements are equivalent:
\begin{enumerate}
\item there exists an $m$-tuple $T\in \M_{d,d}^m$ such that $f_T(X)\neq 0$;
\item $\rank({\mathcal X}^{\{d\}})=dn$.
\end{enumerate}
\end{lemma}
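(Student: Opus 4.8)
The plan is to unwind both statements directly from the definition of $f_T$ and the definition of the tensor blow-up, using the fact that the rank of the blow-up is computed by a determinant test. First I would recall that $f_T(X) = \det(X_1 \otimes T_1 + \cdots + X_m \otimes T_m)$, which is the determinant of an $nd \times nd$ matrix, namely a member of the subspace $\mathcal{X}^{\{d\}} = \mathcal{X} \otimes \M_{d,d} \subseteq \M_{nd, nd}$. So $f_T(X) \neq 0$ for some $T = (T_1,\dots,T_m) \in \M_{d,d}^m$ if and only if some element of $\mathcal{X}^{\{d\}}$ of the form $\sum_i X_i \otimes T_i$ is invertible, i.e. has rank exactly $nd$.

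The one point needing a word of care is that a general element of $\mathcal{X}^{\{d\}}$ is $\sum_i Y_i \otimes T_i$ where the $Y_i$ range over all of $\mathcal{X}$, not just over the fixed spanning tuple $X_1,\dots,X_m$; but since $\mathcal{X}$ is spanned by $X_1,\dots,X_m$, any $Y_i$ is a linear combination of the $X_j$, and collecting terms shows $\sum_i Y_i \otimes T_i = \sum_j X_j \otimes T_j'$ for suitable $T_j' \in \M_{d,d}$. Hence the set $\{\sum_i X_i \otimes T_i : T \in \M_{d,d}^m\}$ is all of $\mathcal{X}^{\{d\}}$. Therefore $\rank(\mathcal{X}^{\{d\}}) = nd$ (which is the maximal possible value, since $\mathcal{X}^{\{d\}} \subseteq \M_{nd,nd}$) if and only if some member $\sum_i X_i \otimes T_i$ attains rank $nd$, which by the preceding paragraph is equivalent to $f_T(X) \neq 0$ for some $T$. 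This gives the equivalence (1) $\Leftrightarrow$ (2).

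There is no real obstacle here; the lemma is, as the paper says, clear. The only thing to make sure of is that ``$\rank(\mathcal{X}^{\{d\}}) = dn$'' really does mean the blow-up contains a full-rank (invertible) matrix, which is immediate since $nd = \min(nd, nd)$ is the largest rank a matrix in $\M_{nd,nd}$ can have, and $\rank$ of a subspace is defined as the maximal rank of its members. So the whole argument is just a translation between the determinant being nonzero on the coefficient space $\M_{d,d}^m$ and the span of the $X_i \otimes T_i$ containing an invertible matrix.
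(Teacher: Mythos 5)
Your argument is correct, and it is exactly the definition-unwinding the paper has in mind: the paper gives no proof, stating only that the lemma is clear, and your proposal fills in precisely what "clear" means here — that $f_T(X)$ is the determinant of a typical element of $\mathcal{X}^{\{d\}}$, and that the set $\{\sum_i X_i\otimes T_i : T\in \M_{d,d}^m\}$ exhausts $\mathcal{X}^{\{d\}}$ because the $X_i$ span $\mathcal{X}$.
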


\begin{proof} [Proof of Lemma~\ref{gamman}]
Let $X=(X_1,\dots,X_m)\in \M_{n,n}^m$ and define $\overline{X}=(X_1,\dots,X_m,0)\in \M_{n,n}^{m+1}$.
We have 
$$X\notin {\mathcal N}(n,m)\Leftrightarrow
\mbox{there exists a $d>0$ such that $\rank({\mathcal X}^{\{d\}})=dn$}\Leftrightarrow \overline{X}\notin {\mathcal N}(n,m+1).$$
Suppose that $X\notin {\mathcal N}(n,m)$. Then we have $\overline{X}\not\in {\mathcal N}(n,m+1)$. So there exists $T\in \M_{d,d}^{m+1}$ with $f_T(\overline{X})\neq 0$ and $d\leq \delta(n,m+1)$.
It follows that $\rank({\mathcal X}^{\{d\}})=dn$ so there exists $T\in \M_{d,d}^m$ with $f_T(X)\neq 0$. This proves $\delta(n,m)\leq \delta(n,m+1)$.

If $m>n^2$ and $X\in\M_{n,n}^m\setminus {\mathcal N}(n,m)$, then ${\mathcal X}$ can be spanned by $n^2$ matrices, say $Y_1,\dots,Y_{n^2}$. 
If $Y=(Y_1,\dots,Y_{n^2})$ then there exists $S\in \M_{d,d}^{n^2}$ with $f_S(Y)\neq 0$ and $d\leq \delta(n,n^2)$.  So we have  $\rank({\mathcal X}^{\{d\}})=dn$,
and there exists $T\in \M_{d,d}^m$ with $f_T(X)\neq 0$. This proves that $\delta(n,m)\leq \delta(n,n^2)$. 
\end{proof}

\begin{definition}
We define the function $r:\Z_{\geq 0}\times \Z_{\geq 0}\to \Z_{\geq 0}$ by
$$
r(p,q)=\rank({\mathcal X}^{\{p,q\}}).
$$
\end{definition}
\begin{remark} \label{Z-dense}
Note that the set of all $T=(T_1,\dots,T_m)\in \M_{p,q}^m$ for which $\sum_{i=1}^m X_i\otimes T_i$ has maximal rank $r(p,q)$ is Zariski dense in $\M_{p,q}^m$.
\end{remark}
\begin{lemma} \label{technical}
The function $r$ has the following properties:
\begin{enumerate}
\item $r(p,q+1) \geq r(p,q)$;
\item $r(p+1,q)\geq r(p,q)$;
\item $r(p,q+1) \geq \frac{1}{2}(r(p,q) + r(p,q+2))$;
\item $r(p+1,q)\geq \frac{1}{2}(r(p,q) + r(p+2,q))$;
\item $r(p,q)$ is divisible by $\gcd(p,q)$.
\end{enumerate}
\end{lemma}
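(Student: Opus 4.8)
The plan is to prove (1) by a padding argument, (3) by a dimension count for a generically chosen blow-up element, and (5) in one line from the regularity lemma; statements (2) and (4) then follow from (1) and (3) by transposing, since replacing each $X_i$ by $X_i^{T}$ and each $T_i$ by $T_i^{T}$ replaces $\sum_i X_i\otimes T_i$ by its transpose and hence turns $r(p,q)$ into the corresponding quantity for the transposed tuple, with the roles of $p$ and $q$ interchanged. For (1): fix $T=(T_1,\dots,T_m)\in\M_{p,q}^m$ with $\rank\bigl(\sum_i X_i\otimes T_i\bigr)=r(p,q)$, let $P=\begin{pmatrix}I_q & 0\end{pmatrix}\in\M_{q,q+1}$, and set $T_i'=T_iP\in\M_{p,q+1}$. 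By the mixed-product rule $\sum_i X_i\otimes T_i'=\bigl(\sum_i X_i\otimes T_i\bigr)(I_n\otimes P)$, and since $I_n\otimes P$ has full row rank $nq$ this product has rank $r(p,q)$; as it lies in $\mathcal{X}^{\{p,q+1\}}$, we get $r(p,q+1)\ge r(p,q)$.

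For (3): I would choose $T\in\M_{p,q+2}^m$ for which $A:=\sum_i X_i\otimes T_i$ has rank $r(p,q+2)$ and for which the element of $\mathcal{X}^{\{p,q\}}$ obtained from $T$ by deleting the first and last column of each $T_i$ has rank $r(p,q)$; such a $T$ exists because each requirement holds on a nonempty Zariski-open subset of the irreducible variety $\M_{p,q+2}^m$ (the first by Remark~\ref{Z-dense}, the second as the preimage of such a subset of $\M_{p,q}^m$ under the linear surjection that deletes those columns). Regard $A$ as a linear map $\C^n\otimes\C^{q+2}\to\C^n\otimes\C^p$ in which the second tensor factor of the source records the columns of the $T_i$, with standard basis $e_1,\dots,e_{q+2}$. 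Put $V_1=\langle e_1,\dots,e_{q+1}\rangle$, $V_2=\langle e_2,\dots,e_{q+2}\rangle$, and $U_j=\C^n\otimes V_j$, so that $U_1+U_2=\C^n\otimes\C^{q+2}$ and $U_1\cap U_2=\C^n\otimes\langle e_2,\dots,e_{q+1}\rangle$. Restricting $A$ to $U_1$ or to $U_2$ gives an element of $\mathcal{X}^{\{p,q+1\}}$, and restricting it to $U_1\cap U_2$ gives (after relabelling columns) the chosen element of $\mathcal{X}^{\{p,q\}}$; hence $\dim A(U_j)\le r(p,q+1)$, $\dim A(U_1\cap U_2)=r(p,q)$, and $\dim\bigl(A(U_1)+A(U_2)\bigr)=\dim A(U_1+U_2)=\dim\im A=r(p,q+2)$. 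Using $A(U_1\cap U_2)\subseteq A(U_1)\cap A(U_2)$ we obtain
\[
2\,r(p,q+1)\ \ge\ \dim A(U_1)+\dim A(U_2)\ =\ \dim\bigl(A(U_1)+A(U_2)\bigr)+\dim\bigl(A(U_1)\cap A(U_2)\bigr)\ \ge\ r(p,q+2)+r(p,q).
\]

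For (5): let $g=\gcd(p,q)$ and $\mathcal{Y}=\mathcal{X}^{\{p/g,q/g\}}$. The elementary $(p/g)\times(q/g)$ matrices tensored with the elementary $g\times g$ matrices form a basis of $\M_{p,q}$, so $\M_{p/g,q/g}\otimes\M_{g,g}=\M_{p,q}$; combined with the reassociation $(\mathcal{Z}\otimes\M_{a,b})\otimes\M_{c,d}=\mathcal{Z}\otimes(\M_{a,b}\otimes\M_{c,d})$ of tensor blow-ups, this gives $\mathcal{Y}^{\{g\}}=(\mathcal{X}\otimes\M_{p/g,q/g})\otimes\M_{g,g}=\mathcal{X}\otimes\M_{p,q}=\mathcal{X}^{\{p,q\}}$. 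Proposition~\ref{regularity} applied to $\mathcal{Y}$ now says that $\rank(\mathcal{Y}^{\{g\}})$ is a multiple of $g$, that is, $g\mid r(p,q)$. I expect (3) to be the only real difficulty: once $T$ has been chosen generically enough to be simultaneously optimal for $\mathcal{X}^{\{p,q+2\}}$ and $\mathcal{X}^{\{p,q\}}$, the inequality is merely the submodularity of the function $U\mapsto\dim A(U)$, so the effort is in arranging that double genericity and in keeping straight which tensor factor carries the columns of the $T_i$; items (1), (2) and (4) are routine padding and transposition, and (5) is immediate from the regularity lemma once one notices that $\M_{p/g,q/g}\otimes\M_{g,g}=\M_{p,q}$.
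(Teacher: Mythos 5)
Your proposal is correct and follows essentially the same route as the paper: part (1) by padding a maximal-rank element of $\mathcal{X}^{\{p,q\}}$ into $\mathcal{X}^{\{p,q+1\}}$, part (3) by picking a generic $T$ and applying the dimension formula $\dim A(U_1)+\dim A(U_2)=\dim(A(U_1)+A(U_2))+\dim(A(U_1)\cap A(U_2))$ together with $A(U_1\cap U_2)\subseteq A(U_1)\cap A(U_2)$ (the paper phrases this with column spans $\mathcal{Y}_J$ for deleted-column sets $J$, which is the same thing), parts (2) and (4) by transposition, and part (5) by writing $\mathcal{X}^{\{p,q\}}$ as $(\mathcal{X}^{\{p',q'\}})^{\{d\}}$ and invoking Proposition~\ref{regularity}. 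The only cosmetic difference is that you impose genericity only at $J=\emptyset$ and at $U_1\cap U_2$ and use the trivial upper bound $\dim A(U_j)\le r(p,q+1)$, whereas the paper asks for genericity across all four subsets; the inequality works either way.
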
 

\begin{proof}\ \\
(1) follows from viewing ${\mathcal X}^{\{p,q\}}$ as a subspace of ${\mathcal X}^{\{p,q+1\}}$.

 Now we will prove (3).  Let $T=(T_1,\dots,T_m)\in \M_{p,q+2}^m$.
For a subset $J\subseteq\{1,2,\dots,q+2\}$, let $T_i^J$ be the submatrix where all the columns with index in $J$ are omitted,
and let ${\mathcal Y}_J$ be the column span of $\sum_{i}X_i\otimes T_i^J$. 
If we choose $T$ general enough, then $\sum_{i} X_i\otimes T_i^J$ will have rank $r(p,q+2-|J|)$ for all $J\subseteq \{1,2,\dots,q+2\}$.
We have ${\mathcal Y}_1+{\mathcal Y}_2={\mathcal Y}_{\emptyset}$ and ${\mathcal Y}_{1,2}\subseteq {\mathcal Y}_{1}\cap {\mathcal Y}_2$. It follows that
$$
r(p,q)=\dim {\mathcal Y}_{1,2}\leq \dim {\mathcal Y}_1\cap {\mathcal Y}_2=\dim{\mathcal Y}_1+\dim{\mathcal Y}_2-\dim({\mathcal Y}_{1}+{\mathcal Y}_2)=
2r(p,q+1)-r(p,q+2).
$$

Parts (2) and (4) follow from (1) and (3) respectively by symmetry. 

To see (5), write $p=dp'$ and $q=dq'$. Then we have $\mathcal{X}^{\{p,q\}}=(\mathcal{X}^{\{p',q'\}})^{\{d\}}$ and the result follows from Proposition~\ref{regularity}.
\end{proof}

In the above lemma, parts (1) and (3) give us that $r(p,q)$ is weakly increasing and weakly concave in the second variable, and parts (2) and (4) give the same conclusion for the first variable. 

\begin{corollary} \label{smart}
The function $r(p,q)$ is weakly increasing and weakly concave in either variable. 
\end{corollary}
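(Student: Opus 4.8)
The plan is to observe that Corollary~\ref{smart} is essentially a repackaging of Lemma~\ref{technical}: parts (1) and (2) already assert weak monotonicity of $r$ in the second and first variable respectively, so the only thing left to unwind is that parts (3) and (4) are exactly the discrete concavity condition.

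Concretely, I would fix $p$ and set $g(q) = r(p,q)$, a function on $\Z_{\geq 0}$. First note that Lemma~\ref{technical}(3) rearranges to $g(q+1) - g(q) \geq g(q+2) - g(q+1)$ for all $q$, i.e. the sequence of successive differences of $g$ is weakly decreasing. Then I would invoke the standard elementary fact that a function on $\Z_{\geq 0}$ whose successive differences are weakly decreasing is weakly concave in the usual sense: for $a \leq b \leq c$ with $a < c$ and $\lambda = (c-b)/(c-a) \in [0,1]$, the inequality $g(b) \geq \lambda g(a) + (1-\lambda) g(c)$ follows by telescoping the difference inequalities (equivalently, by a one-line induction on $c - a$); the case $a = c$ is trivial. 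Together with weak monotonicity from Lemma~\ref{technical}(1), this shows $q \mapsto r(p,q)$ is weakly increasing and weakly concave for each fixed $p$. Applying the identical argument to Lemma~\ref{technical}(2) and (4) handles the first variable, completing the proof.

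The main (and only) obstacle is bookkeeping rather than mathematics: one must be precise about which notion of ``concave'' is intended for a function defined only on the nonnegative integers, and verify that the three-term inequality in Lemma~\ref{technical}(3),(4) is equivalent to it. Since Lemma~\ref{technical} is already established, there is no genuine difficulty here; the corollary is isolated mainly for convenient reference in Sections~\ref{upper} and~\ref{lower}.
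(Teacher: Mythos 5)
Your proposal is correct and matches the paper's approach: the paper itself gives no separate proof of Corollary~\ref{smart}, merely remarking after Lemma~\ref{technical} that parts (1),(3) give monotonicity and concavity in the second variable and (2),(4) give the same for the first. Your extra sentence unwinding the equivalence between the three-term inequality and weak concavity via decreasing successive differences is the standard elementary fact the paper leaves implicit.
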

\begin{lemma}\label{r11}
If $r(1,1)=1$, then we have $r(d,d)=d$ for all $d$. 
\end{lemma}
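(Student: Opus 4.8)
\textit{Proof proposal.} The plan is to show $d \le r(d,d) \le d$. For the lower bound, note that $r$ is weakly increasing in each variable (Corollary~\ref{smart}), so $r(d,d) \ge r(1,1) = 1$; by Lemma~\ref{technical}(5), $r(d,d)$ is divisible by $\gcd(d,d) = d$, and a positive multiple of $d$ is at least $d$. (The degenerate case $d = 0$ is trivial.) So everything reduces to proving $r(d,d) \le d$.

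The hypothesis $r(1,1) = \rank(\mathcal X) = 1$ says that every matrix in $\mathcal X$ has rank at most $1$. I would invoke the elementary structure of such a subspace: write each nonzero element of $\mathcal X$ as $u v^{\top}$, and observe that if two nonzero elements $u_1 v_1^{\top}$ and $u_2 v_2^{\top}$ had both $u_1 \not\parallel u_2$ and $v_1 \not\parallel v_2$, then choosing $x,y$ with $v_1^{\top}x = 1 = v_2^{\top}y$ and $v_2^{\top}x = 0 = v_1^{\top}y$ puts $u_1, u_2$ in $\im(u_1v_1^{\top} + u_2v_2^{\top})$, forcing rank $2$ — a contradiction. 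Hence either all nonzero elements have proportional left factors, or (if some $u_1 \not\parallel u_2$ occurs, forcing $v_1 \parallel v_2$) comparing any third nonzero $u_3 v_3^{\top}$ with these two shows $v_3 \parallel v_1$. This gives the dichotomy: either all nonzero elements of $\mathcal X$ share a common column line $\langle u\rangle$, or all share a common row line $\langle v^{\top}\rangle$. After a linear change of coordinates we may thus assume either $X_j = u\,w_j^{\top}$ for all $j$ (with $u$ fixed), or $X_j = w_j\,v^{\top}$ for all $j$ (with $v$ fixed).

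Now fix $T = (T_1,\dots,T_m) \in \M_{d,d}^m$. In the first case the mixed-product identity $(AB)\otimes(CD) = (A\otimes C)(B\otimes D)$ gives
$$\sum_{j} X_j \otimes T_j \;=\; \sum_{j} (u\,w_j^{\top})\otimes(I_d\,T_j) \;=\; (u\otimes I_d)\Big(\sum_{j} w_j^{\top} \otimes T_j\Big),$$
and since $u\otimes I_d \in \M_{nd,d}$ has rank at most $d$, so does the left-hand side; the second case is identical, factoring $\sum_j X_j\otimes T_j = \big(\sum_j w_j\otimes T_j\big)(v^{\top}\otimes I_d)$ with $v^{\top}\otimes I_d \in \M_{d,nd}$. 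Taking the maximum over $T$ yields $r(d,d) \le d$, hence $r(d,d) = d$. The only nonroutine ingredient is the dichotomy for rank-one matrix spaces, which is classical and self-contained as above; in particular it is valid over any infinite field $K$, since rank-one matrices factor as $uv^{\top}$ and the rank-$2$ computation is field-independent. Note also that concavity of $r$ (Lemma~\ref{technical}(3),(4)) is not needed here, only monotonicity and divisibility together with this structural input.
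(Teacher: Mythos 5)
Your proof is correct and follows essentially the same approach as the paper: both reduce to the structural dichotomy that a subspace of rank-at-most-one matrices has all its nonzero elements sharing a common column line or a common row line, and then observe that this forces $\rank(\mathcal X^{\{d\}}) \le d$. The paper establishes the dichotomy by normalizing one rank-one element to $E_{11}$ and reading off constraints entrywise, whereas you argue via $uv^\top$ factorizations and pairwise comparisons, and the paper gets the lower bound more directly by noting $A\otimes I_d$ has rank $d$ rather than invoking the regularity lemma, but these are cosmetic differences in the same argument.
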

\begin{proof}
Choose a nonzero matrix $A\in {\mathcal X}$ of rank $1$. Using left and right multiplication with matrices in $\GL_n(K)$ we may assume without loss of generality that
$$
A= \left[ \arraycolsep=5pt\def\arraystretch{1} \begin{array}{cccc}
1 & 0 & \cdots & 0\\
0 & 0 & &  0\\
\vdots & &  \ddots & \vdots\\
0 & 0 & \cdots & 0\end{array}\right].
$$
It is clear that $r(d,d)\geq d$.
If $i>1$,  $j>1$ and $B\in {\mathcal X}$ then $B_{i,j}$ has to be zero, otherwise $tA+B$ will have rank at least $2$ for some $t$.
So ${\mathcal X}$ is contained in 
$$
 \left[ \arraycolsep=5pt\def\arraystretch{1} \begin{array}{cccc}
* & * & \cdots & * \\
* & 0 & \cdots  &  0\\
\vdots & \vdots &  \ddots & \vdots\\
* & 0 & \cdots & 0\end{array}\right].
$$
Because all matrices of ${\mathcal X}$ have rank at most $1$, ${\mathcal B}$ must be contained in the union $W_1\cup W_2$, where
$$
W_1= \left[ \arraycolsep=5pt\def\arraystretch{1} \begin{array}{cccc}
* & 0 & \cdots & 0 \\
* & 0 & \cdots  &  0\\
\vdots & \vdots &  \ddots & \vdots\\
* & 0 & \cdots & 0\end{array}\right]
\mbox{\ \ and\ \  }
W_2
 \left[ \arraycolsep=5pt\def\arraystretch{1} \begin{array}{cccc}
* & * & \cdots & * \\
0 & 0 & \cdots  &  0\\
\vdots & \vdots &  \ddots & \vdots\\
0 & 0 & \cdots & 0\end{array}\right].
$$
Because ${\mathcal X}$ is a subspace, it is entirely  contained in $W_1$ or in $W_2$. Now it is clear that the matrices in ${\mathcal X}^{\{d\}}$ have at most $d$ nonzero columns, or at most $d$ nonzero rows, so $r(d,d)\leq d$.

\end{proof}

\begin{proposition}\label{red}
Let $n \geq 2$, and let $d + 1 \geq n$. If $r(d + 1,d+1)= n(d+1)$, then $r(d,d) = nd$ as well.
\end{proposition}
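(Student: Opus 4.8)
The plan is to use the concavity of $r$ in each variable (Corollary~\ref{smart}) together with the divisibility constraint from Proposition~\ref{regularity} to rule out the only possible value of $r(d,d)$ other than $nd$. By parts (1) and (2) of Lemma~\ref{technical} and the hypothesis, we have $r(d,d)\le n(d+1)$, and of course $r(d,d)\le nd$ since $\mathcal X^{\{d,d\}}\subseteq \M_{nd,nd}$; combined with monotonicity, if $r(d,d)\ne nd$ then $r(d,d)\le nd-1$. The goal is to derive a contradiction from $r(d,d)<nd$.

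The key idea is to interpolate via rectangular blow-ups along a path from $(d,d)$ to $(d+1,d+1)$, say through $(d,d+1)$ (or equivalently $(d+1,d)$), and exploit concavity in a single variable on each leg. First I would fix $q=d+1$ and apply concavity in the first variable: the sequence $r(1,d+1), r(2,d+1),\dots$ is weakly increasing and weakly concave, with $r(d+1,d+1)=n(d+1)$ and $r(p,d+1)\le n(d+1)$ for all $p$. Concavity forces the increments $r(p+1,d+1)-r(p,d+1)$ to be weakly decreasing; since the sequence is bounded by $n(d+1)$ and reaches it at $p=d+1$, I want to argue that it must already equal $n(d+1)$ for all $p$ in a suitable range, or at least that $r(d,d+1)$ is forced to be large — specifically $r(d,d+1)\ge n(d+1)-c$ for a small constant $c$ coming from the step down. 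Similarly, starting from $r(d,d+1)$ and decreasing the second variable via concavity in the second variable (the sequence $r(d,1),\dots,r(d,d+1)$ is weakly increasing, weakly concave, with last increment $r(d,d+1)-r(d,d)$), the concavity bound propagates backward to pin $r(d,d)$ from below. The divisibility statement (5) of Lemma~\ref{technical}, $\gcd(d,d+1)=1$ giving no constraint on the rectangular values but $\gcd(d,d)=d\mid r(d,d)$ and $\gcd(d+1,d+1)=(d+1)\mid r(d+1,d+1)$, is what ultimately eliminates the near-miss: if concavity only yields $r(d,d)\ge nd - (d-1)$ or so, the divisibility by $d$ together with $r(d,d)\le nd$ forces $r(d,d)=nd$.

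Making this quantitative is the crux. Concretely: set $\Delta_p = r(p+1,d+1)-r(p,d+1)\ge 0$, weakly decreasing in $p$ by concavity, with $\sum_{p\ge 1}\Delta_p \le n(d+1) - r(1,d+1)$ and equality-to-$n(d+1)$ attained at $p=d+1$, i.e. $\sum_{p=1}^{d}\Delta_p = n(d+1)-r(1,d+1)$ and $\Delta_p=0$ for $p\ge d+1$. Since $\Delta_p$ is weakly decreasing and vanishes for $p\ge d+1$, the last positive window has controlled size; in particular $\Delta_d \le \frac{1}{d}\sum_{p=1}^d \Delta_p$, which is small relative to $n$, giving $r(d,d+1) = n(d+1) - \Delta_d \ge n(d+1) - \frac{1}{d}\big(n(d+1)-r(1,d+1)\big)$. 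A parallel argument in the second variable, using $r(d,d+1)$ as the endpoint and the hypothesis $d+1\ge n$ to ensure the relevant ranges are long enough, bounds $r(d,d+1)-r(d,d)$ by roughly $\frac{1}{d}$ times the total rise, yielding $r(d,d) \ge nd - O(n/d)$. Since $d+1\ge n$, i.e. $d\ge n-1$, the error term $O(n/d)$ is a genuine constant (at most roughly $n$), but after intersecting with the divisibility constraint $d\mid r(d,d)$ and the ceiling $r(d,d)\le nd$, the only surviving value is $nd$. I expect the main obstacle to be bookkeeping the two concavity estimates simultaneously so that the combined error term stays strictly below $d$ (so that divisibility by $d$ closes the gap); this is exactly where the hypothesis $d+1\ge n$ must be used, presumably to guarantee that along each leg there are at least $n$ steps over which the increments are averaged, keeping $\Delta_d < 1$ outright or the total slack $<d$.
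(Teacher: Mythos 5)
Your overall strategy — interpolate along the lattice from $(d+1,d+1)$ down to $(d,d)$ through the rectangular values $r(p,d+1)$ and $r(d,q)$, use weak concavity in each variable to bound the loss, and then invoke divisibility by $d$ to snap $r(d,d)$ up to $nd$ — is the same skeleton the paper uses, and the first leg works exactly as you say: concavity of $r(\,\cdot\,,d+1)$ with endpoints $0$ and $n(d+1)$, together with the trivial bound $r(p,d+1)\le np$, gives $r(p,d+1)=np$ for $0\le p\le d+1$, in particular $r(d,d+1)=nd$. The problem is the second leg, and you already sense it: you need the ``total slack'' to be \emph{strictly} less than $d$, and in the boundary case $d+1=n$ it is exactly $d$, not less.

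Concretely, on the chain $r(d,0),r(d,1),\dots,r(d,d+1)$ the increments are weakly decreasing and sum to $r(d,d+1)=nd$ over $d+1$ steps, so the last increment satisfies $r(d,d+1)-r(d,d)\le \frac{nd}{d+1}$, i.e.
$$r(d,d)\ \ge\ nd-\frac{nd}{d+1}.$$
When $d=n-1$ this lower bound equals $(n-1)d$ exactly, so divisibility by $d$ leaves \emph{two} candidates, $(n-1)d$ and $nd$, and concavity plus divisibility cannot decide between them. This is not a bookkeeping issue you can massage away: the numerical function $r(p,q)=pq$ for $p,q\le n$ (with $r(p,q)=n\min(p,q)$ beyond that) satisfies every constraint you are using — weak monotonicity and concavity in each variable, $\gcd(p,q)\mid r(p,q)$, and the ceilings $r(p,q)\le n\min(p,q)$ — and it has $r(n,n)=n^2$ with $r(n-1,n-1)=(n-1)^2$. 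So no amount of averaging along purely numerical chains can close the gap.

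What the paper does differently is bring in a \emph{structural} fact, Lemma~\ref{r11}: if $r(1,1)=1$, then $\mathcal X$ is a rank-one compression space and $r(d,d)=d$ for all $d$, contradicting $r(d+1,d+1)=n(d+1)$ with $n\ge 2$. Hence $r(1,1)\ge 2$. This extra unit of height is then fed into a concavity chain that passes through $(1,1)$ rather than through $(1,0)$: from $r(1,1)\ge 2$ and $r(1,d+1)=n$ one gets $r(1,d)\ge \frac{(d-1)n+2}{d}>n-1$, so $r(1,d)\ge n$; then concavity in the first variable from $(1,d)$ to $(d+1,d)$ gives $r(d,d)\ge nd-n+\frac{n}{d}>d(n-1)$, and now divisibility by $d$ does the job. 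Your proposal omits Lemma~\ref{r11} entirely, and without it (or some equivalent structural input about rank-one subspaces) the case $d=n-1$ genuinely cannot be closed.
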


\begin{proof}
Suppose that $r(d+1,d+1)=n(d+1)$. If $1\leq a\leq d$, then weak concavity implies that
$$
r(d+1,a)\geq \frac{(d+1-a)r(d+1,0)+ar(d+1,d+1)}{d+1}=\frac{an(d+1)}{d+1}=an.
$$
The inequality $r(d+1,a)\leq an$ is clear, so $r(d+1,a)=an$. Similarly, we have $r(a,d+1)=an$.
If $r(1,1)=1$ then we get $r(d+1,d+1)=d+1$ by Lemma~\ref{r11}  which contradicts $r(d+1,d+1)=n(d+1)$. So we have $r(1,1)\geq 2$.
Since $r(p,q)$  is weakly concave in the second variable, we have  $$r(1,d) \geq \frac{(d-1)\cdot r(1,d+1) + 1\cdot r(1,1)}{d} \geq \frac{(d - 1) n +  2 }{d} = n - \frac{n-2}{d} > n-1,$$ 
where the last inequality follows as $d \geq n-1$. Since $r(1,d)$ must be an integer, we have $r(1,d) \geq n$. Now, by the weak concavity in the first variable, we have $$r(d,d) \geq \frac{(d-1)\cdot r(d+1,d) + 1\cdot r(1,d)}{d}\geq \frac{(d-1)nd + n}{d} = nd - n + \frac{n}{d}.$$ 
Note that since $d \geq n-1$, we have $d + \frac{n}{d} > n$ or equivalently that $-n + \frac{n}{d} > -d$. Thus, we have $$r(d,d) \geq nd - n + \frac{n}{d} > d(n-1).$$ Recall that $r(d,d)$ must be a multiple of $d$ by Lemma~\ref{regularity}. Thus $r(d,d) = nd$.

\end{proof}

\begin{proof}[Proof of Theorem~\ref{main}]
Suppose $(X_1,X_2,\dots,X_m) \notin \mathcal{N}(n,m)$. By Lemma~\ref{fAexists}, $r(d,d)=dn$ for some $d$.
Without loss of generality, we can assume $d \geq n$. By repeated application of Proposition~\ref{red}, we conclude that $r(n-1,n-1) = n(n-1)$. So, again by Lemma~\ref{fAexists}, there exists 
an $m$-tuple $T=(T_1,\dots,T_m) \in \M_{n-1,n-1}^m$ such that $f_T(X)\neq 0$.
\end{proof}

\section{Degree bounds on generating invariants}\label{upper}
Suppose that the base field $K$ has characteristic 0,  $G$  is a connected semisimple group and $V$ is a representation of $G$. A homogeneous system of parameters for the invariant ring $K[V]^G$ is a set of homogeneous invariants
$f_1,f_2,\dots,f_r$ such that $f_1,f_2,\dots,f_r$ are algebraically independent and $K[V]^G$ is a finitely generated $K[f_1,\dots,f_r]$-module. 
The ring $K[V]^G$ is a finitely generated $K[f_1,\dots,f_r]$-module if and only if the zero set  of $f_1,\dots,f_r$ is the null cone (see\cite{Hilbert2}).
\begin{definition}
For a representation $V$ of a connected semisimple group $G$, $\beta(\K[V]^G)$ is defined as the smallest integer $d$ such that invariants of degree $\leq d$ generate the ring of invariants $\K[V]^G$. 
\end{definition}
Using the homogeneous system of parameters in Corollary~\ref{hsop}, we can get a bound for the generating invariants (see \cite{Popov1,Popov2} and~\cite[Corollary~2.6.3]{DK}):

\begin{proposition}\label{Popov}
Suppose $V$ is a representation of a connected semisimple group $G$. Let $f_1,f_2,\dots,f_r$ be a homogeneous system of parameters for $\K[V]^G$, and let $d_i = \deg(f_i)$. Then  $$\beta(\K[V]^G) \leq \max \{d_1+d_2 + \dots d_r - r,d_1,d_2,\dots,d_r \}.$$ 
\end{proposition}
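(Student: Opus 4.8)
The plan is to follow the classical argument going back to Popov (\cite{Popov1,Popov2}; see also \cite[Corollary~2.6.3]{DK}), whose two inputs are the Cohen--Macaulayness of invariant rings and the Gorenstein property of invariant rings of connected semisimple groups. Write $R=\K[V]^G$ and $A=\K[f_1,\dots,f_r]$, and recall $d_i=\deg f_i$. Since $\kar(\K)=0$ and $G$ is reductive, the Hochster--Roberts theorem gives that $R$ is Cohen--Macaulay. Because $f_1,\dots,f_r$ is a homogeneous system of parameters, $R$ is a finitely generated module over the polynomial ring $A$ of the same Krull dimension $r$; a maximal Cohen--Macaulay graded module over a graded polynomial ring is graded free, so $R=\bigoplus_{j=1}^s A\eta_j$ for homogeneous $\eta_j$ with $\eta_1=1$ and degrees $0=e_1\le e_2\le\cdots\le e_s$.

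First I would extract the elementary consequence: every element of $R$ is an $A$-linear combination of the $\eta_j$, hence a polynomial in $f_1,\dots,f_r,\eta_2,\dots,\eta_s$, so these generate $R$ as a $\K$-algebra and $\beta(R)\le\max\{d_1,\dots,d_r,e_s\}$. It therefore remains to show $e_s\le d_1+\cdots+d_r-r$, since then $\max\{d_1,\dots,d_r,e_s\}\le\max\{d_1+\cdots+d_r-r,d_1,\dots,d_r\}$.

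To bound $e_s$ I would pass to the canonical module. Since $G$ is connected and semisimple it has no nontrivial characters, hence acts trivially on $\wedge^{\dim V}V^*$; it is then known (Knop) that $R$ is Gorenstein, so the numerator $N(t)=\sum_{j=1}^s t^{e_j}$ of the Hilbert series $H_R(t)=N(t)\big/\prod_{i=1}^r(1-t^{d_i})$ is palindromic of degree $e_s$, equivalently the $a$-invariant is $a(R)=e_s-\sum_i d_i$. The decisive point, again due to Knop (obtained by relating a $G$-invariant top-degree differential form on $V$ to one on the quotient $V/\!/G$, where semisimplicity removes the character twist), is that $a(R)\le-\dim(V/\!/G)=-r$. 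This is exactly $e_s\le d_1+\cdots+d_r-r$, and combined with the previous paragraph it yields $\beta(\K[V]^G)\le\max\{d_1+\cdots+d_r-r,d_1,\dots,d_r\}$.

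The module-theoretic bookkeeping --- from ``free over $A$'' to a generating set and its degrees --- is routine. The substance is carried by the two imported results: Cohen--Macaulayness (Hochster--Roberts), and, the main obstacle, the Gorenstein property together with the inequality $a(\K[V]^G)\le-\dim(V/\!/G)$. This is precisely where the hypothesis that $G$ is connected semisimple, rather than merely reductive, is essential: it forces the canonical module of $R$ to be a shift of $R$ and pins down the shift, and the inequality on $e_s$ genuinely fails for some non-semisimple groups.
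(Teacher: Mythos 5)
The paper itself gives no proof of Proposition~\ref{Popov}; it simply cites \cite{Popov1,Popov2} and \cite[Corollary~2.6.3]{DK}. Your argument reconstructs the standard proof that those references supply, and it is correct: Hochster--Roberts gives Cohen--Macaulayness, hence $R=\K[V]^G$ is graded free over $A=\K[f_1,\dots,f_r]$ with a homogeneous basis $\eta_1=1,\eta_2,\dots,\eta_s$, which reduces the algebra-generation bound to controlling $e_s=\max_j\deg\eta_j$; Knop's inequality $a(\K[V]^G)\le-\dim(V/\!/G)=-r$ for a connected group with trivial character group then converts into $e_s\le d_1+\cdots+d_r-r$, and the proposition follows.

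One point worth tightening: the identity $a(R)=e_s-\sum_i d_i$ is already a consequence of the Cohen--Macaulay/free-module structure, since for a CM graded ring $a(R)=\deg H_R(t)$ and the free decomposition gives $H_R(t)=\bigl(\sum_j t^{e_j}\bigr)/\prod_i(1-t^{d_i})$. It is not an ``equivalent'' reformulation of palindromicity, and the Gorenstein property of $R$ (while true here, again by Knop) plays no role in the bound. So the Gorenstein/palindromicity paragraph is a harmless detour: the two inputs that actually carry the argument are (i) linear reductivity $\Rightarrow$ $R$ Cohen--Macaulay $\Rightarrow$ $R$ free over $A$, and (ii) Knop's bound $a(R)\le-r$, which is precisely where connectedness and the absence of nontrivial characters of $G$ are used. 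You correctly identify both, so the proof is sound and matches the cited source in substance.
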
 

We go back to the special case where  $V=\M_{n,n}^m$,  $G=\SL$, and $\beta(n,m)=\beta(K[V]^G)$.

\begin{corollary} \label{hsop}
Let $n \geq 2$, and let $r$ be the Krull dimension of $R(n,m)$. Then there exist $r$ invariants of degree $n^2 - n$ that form a homogeneous system of parameters. 
\end{corollary}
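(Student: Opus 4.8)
The plan is to produce a homogeneous system of parameters for $R(n,m)$ all of whose members have degree exactly $n^2-n$, by exploiting Theorem~\ref{main} together with a standard "generic linear combination of Noether-type invariants" trick. First I would record what Theorem~\ref{main} buys us: since $n\geq 2$, for every $X\notin\mathcal{N}(n,m)$ there is an $m$-tuple $T\in\M_{n-1,n-1}^m$ with $f_T(X)\neq 0$. Each such $f_T$ has degree $(n-1)n=n^2-n$. Thus the (finite-dimensional) space $W\subseteq R(n,m)_{n^2-n}$ spanned by all the $f_T$ with $T\in\M_{n-1,n-1}^m$ has the property that its common zero locus is precisely the null cone $\mathcal{N}(n,m)$.

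Next I would invoke the general principle that a homogeneous system of parameters can be extracted from any finite set of homogeneous invariants of a common degree whose common zero set is the null cone. Concretely: let $g_1,\dots,g_N$ be a spanning set of $W$, all of degree $n^2-n$, with $\mathbf{V}(g_1,\dots,g_N)=\mathcal{N}(n,m)$. Let $r$ be the Krull dimension of $R(n,m)$. Since $K$ has characteristic $0$ and hence is infinite, a Noether-normalization / prime-avoidance argument (see, e.g., \cite[Section~2.4]{DK}) shows that $r$ generic $K$-linear combinations $h_1,\dots,h_r$ of $g_1,\dots,g_N$ still cut out $\mathcal{N}(n,m)$ set-theoretically; equivalently, $\sqrt{(h_1,\dots,h_r)}$ is the ideal of the null cone. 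Because each $h_j$ is a linear combination of degree-$(n^2-n)$ invariants, each $h_j$ is homogeneous of degree $n^2-n$. By the characterization recalled just before the statement (the zero set of a set of homogeneous invariants is the null cone iff $K[V]^G$ is a finitely generated module over the subalgebra they generate, cf.~\cite{Hilbert2}), the $h_j$ generate a subalgebra over which $R(n,m)$ is finite, and a finite algebra extension of this form forces $h_1,\dots,h_r$ to be algebraically independent (their number equals the Krull dimension $r$). Hence $h_1,\dots,h_r$ is a homogeneous system of parameters, all of degree $n^2-n$.

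A couple of technical points I would make sure to address. One must check $W\neq 0$ and more importantly that $W$'s common zero set is exactly $\mathcal{N}(n,m)$ and not something larger: the inclusion $\mathcal{N}(n,m)\subseteq\mathbf{V}(W)$ is automatic since each $f_T$ is a non-constant homogeneous invariant, and the reverse inclusion is exactly the content of Theorem~\ref{main} (every point outside the null cone is detected by some $f_T$ with $T$ of size $n-1$). One should also note the degenerate possibility that $n^2-n$ exceeds nothing problematic—for $n\geq 2$ we have $n^2-n\geq 2$, so these are genuinely non-constant invariants. Finally, the existence of the generic linear combinations requires $K$ infinite, which holds since $\kar(K)=0$.

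The main obstacle is purely the prime-avoidance/generic-linear-combination step: one needs that over an infinite field, from any homogeneous ideal $I$ with $\sqrt{I}=\mathfrak{m}_{\mathcal N}$ (the null-cone ideal) and $I$ generated in a single degree $e$, one can select $r=\dim R(n,m)$ elements of $I_e$ whose radical is still $\mathfrak{m}_{\mathcal N}$. This is the standard argument underlying Noether normalization in the graded setting, and I would cite \cite[Section~2.4]{DK} (or the analogous discussion in \cite{Hilbert2}) rather than reprove it; the only care needed is to keep all chosen generators in the fixed degree $n^2-n$, which is automatic since we only ever take $K$-linear combinations within $W$. Everything else—algebraic independence of an h.s.o.p., the module-finiteness criterion—is formal once the set-theoretic statement $\mathbf{V}(h_1,\dots,h_r)=\mathcal{N}(n,m)$ is in hand.
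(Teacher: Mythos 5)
Your proposal is correct and follows the same route as the paper: invoke Theorem~\ref{main} to see that the degree-$(n^2-n)$ invariants cut out the null cone, then apply the graded Noether normalization lemma (cited in the paper as \cite[Lemma~2.4.7]{DK}) to extract $r$ homogeneous parameters of that single degree. The extra detail you supply—spelling out the generic-linear-combination/prime-avoidance mechanism and the algebraic-independence conclusion—is just the content of that cited lemma, so the argument is the same in substance.
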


\begin{proof}
By Theorem~\ref{main}, the invariants of degree $n^2 - n$ define the null cone. We apply the Noether normalization lemma (see \cite[Lemma~2.4.7]{DK}) to conclude that there exists $r$ invariants of degree $n^2 - n$ that form a homogeneous system of parameters.
\end{proof}

\begin{proof}[Proof of Theorem~\ref{deg.bds}]
For $n \geq 2$, we apply the  above proposition to the left-right action of $\SL$ on $n^2$-tuples of matrices using the homogeneous system of parameters from Corollary~\ref{hsop} to get $$
\beta(n,m) \leq  r (n^2 - n) - r = r(n^2 - n - 1) \leq mn^2(n^2 - n -1) < mn^4.$$ It is clear that $\beta(R(1,m)) = 1$, so we have $\beta(R(n,m)) \leq mn^4$ for all $n$ and $m$.
\end{proof}

\section{Lower bounds for $\gamma(n)$ and $\delta(n)$} \label{lower}
In this section we prove  Theorem~\ref{lower bounds}.
Let $ A =  t_1X_1 + t_2X_2 + \dots + t_{m}X_{m}$ be an $n \times n$ linear matrix. The $(i,j)^{th}$ entry of $A$ is a linear function in the indeterminates $t_k$'s with coefficients in $\K$. In fact if $c_k \in \K$ is the $(i,j)^{th}$ entry of $X_k$, then the $(i,j)^{th}$ entry of $A$ is given by $$A_{i,j} =  \sum_{k=1}^m c_k t_k.$$ For $p \times p$ matrices $T_1,T_2,\dots,T_{m}$, observe that the expression  $\sum_{k=1}^{m} X_k \otimes T_k$ is an $n \times n$ block matrix and the size of each block is $p \times p$.  Moreover, the $(i,j)^{th}$ block is $$\sum_{k=1}^m c_k T_k.$$

\begin{remark} \label{substitution}
In effect $ \sum_{k=1}^{m} X_k \otimes T_k$ is simply the block matrix obtained by substituting the $T_k$ for $t_k$ in the linear matrix $A$. 
\end{remark}
\begin{lemma} \label{assumel}
If there exist $k\times k$ matrices $T_1,T_2,\dots,T_k$ such that $X_1\otimes T_1+\cdots+X_k\otimes T_k$ is invertible, 
then there exists $k\times k$ matrices $S_2,S_3,\dots,S_k$ such that $X_1\otimes I+X_2\otimes S_2+\cdots+X_k\otimes S_k$ is invertible.
\end{lemma}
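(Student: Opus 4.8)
The plan is to exploit the fact proved in Proposition~\ref{inv.skew} (or rather the ``only if'' direction of Proposition~\ref{regularity}-type reasoning via the function $r$) together with the Zariski density observation in Remark~\ref{Z-dense}. Suppose we have $k\times k$ matrices $T_1,\dots,T_k$ with $X_1\otimes T_1+\cdots+X_k\otimes T_k$ invertible, i.e.\ of full rank $nk$. The goal is to replace $T_1$ by the identity matrix $I_k$ after adjusting the other $T_i$. The first step is a reduction: since the set of $(T_1,\dots,T_k)\in\M_{k,k}^k$ for which $\sum_i X_i\otimes T_i$ has the maximal rank $nk$ is Zariski open and nonempty, and the invertible $k\times k$ matrices form a Zariski dense open subset of $\M_{k,k}$, we may perturb so that $T_1$ itself is invertible while keeping $\sum_i X_i\otimes T_i$ invertible. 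Concretely, fix $T_2,\dots,T_k$ as given and consider the polynomial map $T_1\mapsto \det(X_1\otimes T_1+X_2\otimes T_2+\cdots)$; it is not identically zero (it is nonzero at the original $T_1$), so it is nonzero on a dense open set, which meets the dense open set $\GL_k$.

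Next, with $T_1\in\GL_k$, I would right-multiply the whole matrix $\sum_i X_i\otimes T_i$ by the invertible matrix $I_n\otimes T_1^{-1}$. Using the mixed-product property of the Kronecker product, $(X_i\otimes T_i)(I_n\otimes T_1^{-1}) = X_i\otimes (T_iT_1^{-1})$. Hence
$$
\Bigl(\sum_{i=1}^k X_i\otimes T_i\Bigr)(I_n\otimes T_1^{-1}) = X_1\otimes I_k + \sum_{i=2}^k X_i\otimes (T_iT_1^{-1}),
$$
which is still invertible since it is a product of two invertible matrices. Setting $S_i = T_iT_1^{-1}$ for $i=2,\dots,k$ gives exactly the desired conclusion.

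The only genuine subtlety—and the step I'd be most careful about—is the density argument in the first paragraph: one must make sure that the locus where $\sum_i X_i\otimes T_i$ is invertible is not disjoint from $\GL_k$ in the $T_1$-slice. This is handled cleanly because irreducibility of $\M_{k,k}$ (as an affine space) means any two nonempty Zariski-open subsets intersect, so the open set $\{T_1 : \det(\sum_i X_i\otimes T_i)\neq 0\}$, which is nonempty by hypothesis, meets $\GL_k$. Everything else is the formal manipulation with Kronecker products, which is routine. I do not expect to need the regularity lemma or concavity here—this lemma is purely a normalization step.
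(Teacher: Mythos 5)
Your proof is correct and follows essentially the same route as the paper: perturb $T_1$ to be invertible using Zariski density, then factor out $T_1$ via the Kronecker mixed-product identity. The only cosmetic difference is that you right-multiply by $I_n\otimes T_1^{-1}$ (yielding $S_i=T_iT_1^{-1}$) whereas the paper left-multiplies by $(I\otimes T_1)^{-1}$ (yielding $S_i=T_1^{-1}T_i$); both are equally valid.
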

\begin{proof}
If there are exists $T_1,T_2,\dots,T_k$ such that $\sum_{i=1}^m X_i\otimes T_i$ is invertible, then this matrix will be invertible for general choices of $T_1,\dots,T_k$.
In particular, without loss of generality we may assume that $T_1$ invertible.  If we set $S_i=T_1^{-1}T_i$ for $i\geq 2$, then we have
$$
(I\otimes T_1)^{-1}\sum_{i=1}^m X_i\otimes T_i=X_1\otimes I+X_2\otimes S_2+\cdots+X_k\otimes S_k
$$
is invertible.
\end{proof}

Given the remark and lemma above, we now state a straightforward lemma which follows from the definition of $\delta(n)$.

\begin{lemma} \label{obvious}
Suppose we have $X=(X_1,\dots,X_m)\in \M_{n,n}^m$ and suppose that  the linear matrix $A=\sum_{i=1}^m t_iX_i$  has the properties:
\begin{enumerate}
 \item For any $k < d$, substituting $t_1=I$ and substituting any $k \times k$ matrices for the indeterminates $t_2,t_3,\dots,t_m$ gives us a singular matrix;
 \item there exists a particular substitution of $d \times d$ matrices for $t_1,t_2,\dots,t_m$ which gives a non-singular matrix.
\end{enumerate}
Then we have  $\delta(n,m) \geq d$ and $\delta(n)\geq d$.
\end{lemma}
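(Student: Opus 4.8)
\textbf{Proof proposal for Lemma~\ref{obvious}.}

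The plan is to deduce this directly from the characterizations of $\delta(n,m)$ already set up in the excerpt, via Lemma~\ref{fAexists}, Remark~\ref{substitution}, and Lemma~\ref{assumel}. First I would unpack condition (2): a substitution of $d\times d$ matrices $t_i \mapsto T_i$ making $A$ nonsingular means, by Remark~\ref{substitution}, that $\sum_i X_i\otimes T_i$ is invertible, i.e.\ $\rank(\mathcal{X}^{\{d\}}) = dn$ where $\mathcal{X}=\operatorname{span}(X_1,\dots,X_m)$. By Lemma~\ref{fAexists} this is equivalent to the existence of $T\in\M_{d,d}^m$ with $f_T(X)\neq 0$, so in particular $X\notin\mathcal{N}(n,m)$ and $\delta(n,m)\geq 1$ is not yet enough — we need the lower bound $d$.

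The core of the argument is to show that no $k<d$ works, i.e.\ that there is no $T\in\M_{k,k}^m$ with $f_T(X)\neq 0$. Suppose for contradiction that for some $k<d$ there exist $k\times k$ matrices $T_1,\dots,T_m$ with $\sum_i X_i\otimes T_i$ invertible. By Lemma~\ref{assumel} (applied with the role of the distinguished index played by $t_1$, which is legitimate since invertibility is a Zariski-open condition so we may assume $T_1$ invertible and renormalize), there exist $k\times k$ matrices $S_2,\dots,S_m$ such that $X_1\otimes I + X_2\otimes S_2 + \cdots + X_m\otimes S_m$ is invertible. But this is exactly the substitution $t_1=I$, $t_i = S_i$ for $i\geq 2$ into $A$, which by Remark~\ref{substitution} must be singular by hypothesis~(1) (since $k<d$). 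This contradiction shows no $k<d$ admits a witnessing tuple, while condition (2) furnishes a witnessing tuple at size $d$. Hence the smallest $k$ for which some $k\times k$ tuple $T$ gives $f_T(X)\neq 0$ is at least $d$, which by the definition of $\delta(n,m)$ gives $\delta(n,m)\geq d$; and then $\delta(n)=\max_m\delta(n,m)\geq d$ follows immediately.

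The only real subtlety — and the step I would be most careful about — is the reduction in hypothesis~(1) to the \emph{normalized} form with $t_1=I$, since (1) as stated already allows \emph{arbitrary} $k\times k$ substitutions for $t_2,\dots,t_m$ after setting $t_1=I$; so Lemma~\ref{assumel} slots in cleanly provided one checks that the genericity argument in its proof indeed lets us assume $T_1$ invertible without loss of generality. I would spell out that invertibility of $\sum_i X_i\otimes T_i$ is an open condition on $(T_1,\dots,T_m)$, nonempty by assumption, hence holds on a dense set, which meets the (also dense, nonempty) set where $T_1\in\GL_k(K)$ — here using that $K$ is infinite so these Zariski-dense sets have nonempty intersection. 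Everything else is bookkeeping between the three equivalent languages (linear matrix substitutions, tensor blow-up rank, and the invariants $f_T$).
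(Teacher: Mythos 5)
Your proposal is correct and follows exactly the route the paper has in mind when it says the lemma "follows from the definition of $\delta(n)$" given Remark~\ref{substitution} and Lemma~\ref{assumel}: condition~(2) together with Lemma~\ref{fAexists} shows $X\notin\mathcal{N}(n,m)$, while Lemma~\ref{assumel} lets you normalize any hypothetical size-$k$ witness to $T_1=I$ and derive a contradiction with condition~(1). The genericity point you flag (nonempty Zariski-open sets in $\M_{k,k}^m$ meeting over an infinite field) is exactly the content of the proof of Lemma~\ref{assumel}, so the pieces fit together as intended.
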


One can use the procedure in \cite[Section~6]{HW} to construct a linear matrix in which the top right corner entry of its inverse (over the skew field) is any desired rational expression. For any $d$, we can find non-trivial rational expressions which are not defined for matrices of size $< d$, such as taking the inverse of the famous Amitsur-Levitzki polynomial (see~\cite{AL}). However, the size of the linear matrix becomes very large giving us very weak bounds. 

To find better bounds, we want to keep the size of $n$ as small as possible, and we present the most efficient that we are able to find. We make use of the Cayley-Hamilton theorem, which says that a matrix satisfies its characteristic polynomial. For the sake of clarity, we discuss it in detail for $d = 3$, and then describe the general construction.   

For this construction, $A,B,$ and $C$ will denote arbitrary $k \times k$ matrices, and $I$ will denote the identity matrix of size $k \times k$. First consider the block matrix $$N_3 = \left[
\arraycolsep=5pt\def\arraystretch{1}\begin{array}{ccc}
A^2B & AB & B \\
A^2C & AC & C \\
A^2 & A & I \end{array} \right].$$ 

If $k \leq 2$, then the characteristic polynomial of $A$ gives us a linear dependency in the columns. For example, if $k=2$ and the characteristic polynomial of $A$ is $t^2+at+b$,
then we have
$$
\left[\arraycolsep=5pt\def\arraystretch{1}\begin{array}{ccc}
A^2B & AB & B \\
A^2C & AC & C \\
A^2 & A & I \end{array} \right]
\left[
\arraycolsep=5pt\def\arraystretch{1}\begin{array}{c}
I\\
aI\\
bI
\end{array} \right]=0.
$$

However, if we pick \begin{equation} \label{ABC}
A = \left[
\arraycolsep=5pt\def\arraystretch{1}\begin{array}{ccc}
\lambda_1& 0 & 0 \\
0 & \lambda_2 & 0 \\
0 & 0 & \lambda_3 \end{array} \right], B = \left[
\arraycolsep=5pt\def\arraystretch{1}\begin{array}{ccc}
0& 0 &1 \\
1 & 0 &0 \\
0 & 1 & 0
 \end{array} \right] \text{ and } C = \left[
\arraycolsep=5pt\def\arraystretch{1}\begin{array}{ccc}
0& 1 & 0 \\
0 & 0 &1 \\
1 & 0 & 0
 \end{array} \right],\end{equation} with the $\lambda_i$ pairwise distinct, then $$N_3 = \left[ \arraycolsep=5pt\def\arraystretch{1} \begin{array}{ccc|ccc|ccc}
0 &  0 & \lambda_1^2 & 0 & 0 & \lambda_1 & 0 & 0 & 1\\
\lambda_2^2 &  0 & 0 & \lambda_2 & 0 & 0 & 1 & 0 & 0 \\
0 &  \lambda_3^2 & 0 & 0 & \lambda_3 & 0 & 0 & 1 & 0 \\ \hline
0 &  \lambda_1^2 & 0 & 0 & \lambda_1& 0 & 0 & 1 & 0 \\ 
0 &  0  & \lambda_2^2 & 0 & 0 & \lambda_2 & 0 & 0 & 1\\
\lambda_3^2 &  0 & 0 & \lambda_3 & 0 & 0 & 1 & 0 & 0 \\\hline
\lambda_1^2 &  0 & 0 & \lambda_ 1& 0 & 0 & 1 & 0 & 0 \\
0 &  \lambda_2^2 & 0 & 0 & \lambda_2 & 0 & 0 & 1 & 0 \\ 
0 &  0 &  \lambda_3^2 & 0 & 0 & \lambda_3 & 0 & 0 & 1\\
\end{array} \right]. $$

Permuting the rows of $N_3$, we get $$\left[ \arraycolsep=5pt\def\arraystretch{1} \begin{array}{ccc|ccc|ccc}

\lambda_1^2 &  0 & 0 & \lambda_1 & 0 & 0 & 1 & 0 & 0 \\
\lambda_2^2 &  0 & 0 & \lambda_2 & 0 & 0 & 1 & 0 & 0 \\
\lambda_3^2 &  0 & 0 & \lambda_ 3& 0 & 0 & 1 & 0 & 0 \\ \hline
0 &  \lambda_1^2 & 0 & 0 & \lambda_1 & 0 & 0 & 1 & 0 \\ 
0 &  \lambda_2^2 & 0 & 0 & \lambda_2& 0 & 0 & 1 & 0 \\ 
0 &  \lambda_3^2 & 0 & 0 & \lambda_3 & 0 & 0 & 1 & 0 \\ \hline
0 &  0 & \lambda_1^2 & 0 & 0 & \lambda_1 & 0 & 0 & 1\\ 
0 &  0  & \lambda_2^2 & 0 & 0 & \lambda_2 & 0 & 0 & 1\\ 
0 &  0 &  \lambda_3^2 & 0 & 0 & \lambda_3 & 0 & 0 & 1\\
\end{array} \right].$$

Then permuting the columns, we get $$\left[ \arraycolsep=5pt\def\arraystretch{1} \begin{array}{ccc|ccc|ccc}

\lambda_1^2 &  \lambda_1 & 1 & & 0 & 0 & 0 & 0 & 0 \\
\lambda_2^2 &  \lambda_2 & 1 & 0 & 0 & 0 & 0 & 0 & 0 \\
\lambda_3^2 & \lambda_3 & 1 & 0 & 0 & 0 & 0 & 0 & 0 \\ \hline
0 &   0 & 0 & \lambda_1^2 & \lambda_1 & 1 & 0 & 0 & 0 \\ 
0 &   0 & 0 &\lambda_2^2 & \lambda_2& 1 & 0 & 0 & 0 \\ 
0 &   0 & 0 &\lambda_3^2 & \lambda_3  & 1 & 0 & 0 & 0 \\ \hline
0 &  0 &  0 & 0  & 0 & 0 & \lambda_1^2 & \lambda_1 & 1\\ 
0 &  0  & 0 & 0 &  0 & 0 & \lambda_2^2 &\lambda_2 & 1\\ 
0 &  0 &   0 & 0 &  0 & 0 & \lambda_3^2 & \lambda_3 &1\\
\end{array} \right],$$ and hence $N_3$ is non-singular as the $\lambda_i$ are pairwise distinct. The one problem with using this directly is the non-linearity of the entries in $N_3$. To fix this, we consider the $8 \times 8 $ block matrix $$F_3 = \left[ \arraycolsep=5pt\def\arraystretch{1.2}\begin{array}{ccccc|ccc}
I & \multicolumn{1}{c|}{}    &      &        &   & B &      &  \\ 
-A  & \multicolumn{1}{c|}{I}   &      &       &    &    &  B & \\
     & \multicolumn{1}{c|}{-A}  &      &       &    &    &     & B\\ \cline{1-4} \cline{6-8}
     &      &  \multicolumn{1}{|c}{I}  &   \multicolumn{1}{c|}{}   &     &C &   & \\
     &      & \multicolumn{1}{|c}{-A} &    \multicolumn{1}{c|}{I}  &     &    & C & \\
     &      &  \multicolumn{1}{|c}{}   &   \multicolumn{1}{c|}{-A} &     &    &     & C \\ \cline{3-8}
     &       &    &        & \multicolumn{1}{|c|}{I}   & A&  & \\
     &       &     &       & \multicolumn{1}{|c|}{-A} &    & A & I\\
\end{array} \right].$$

The invertibility of such a block matrix is unaffected by adding left multiplied block rows to other block rows, and by adding right multiplied block columns to other block columns. We left multiply the first block row by $A$ and add it to the second block row. Then we left multiply the second block row by $A$ and add it to the third block row. Focusing on the top three block rows, we have transformed
$$ \left[ \arraycolsep=5pt\def\arraystretch{1}\begin{array}{cccccccc}
I    &   \multicolumn{1}{c|}{}   &      &        &   \multicolumn{1}{c|}{}& B &      &  \\ 
-A  &\multicolumn{1}{c|}{I}   &      &       &       \multicolumn{1}{c|}{} & & B & \\
     & \multicolumn{1}{c|}{-A}  &      &       &       \multicolumn{1}{c|}{} &   &  & B\\
\end{array} \right] \longrightarrow \left[ \arraycolsep=5pt\def\arraystretch{1}\begin{array}{cccccccc}
I    & \multicolumn{1}{c|}{}     &      &        &   \multicolumn{1}{c|}{} & B & &        \\ 
  & \multicolumn{1}{c|}{I}   &      &        &    \multicolumn{1}{c|}{}    & AB  &  B & \\
     &  \multicolumn{1}{c|}{}&      &       &     \multicolumn{1}{c|}{}   & A^2B &  AB  & B\\
\end{array} \right].$$
     
We can also right multiply block columns by a matrix and add them to other block columns. So, we can further transform the top 3 block rows to $$ \left[ \arraycolsep=5pt\def\arraystretch{1}\begin{array}{cccccccc}
I    &   \multicolumn{1}{c|}{}   &      &        &  \multicolumn{1}{c|}{} &&  &        \\ 
  & \multicolumn{1}{c|}{I}   &      &        &     \multicolumn{1}{c|}{}  & &  &    \\ 
     & \multicolumn{1}{c|}{} &      &       &     \multicolumn{1}{c|}{} & A^2B &  AB  & B\\ \end{array} \right].$$ 
      
Notice that these transformations do not affect the rest of the block rows in $F_3$. A similiar procedure for the next $3$ block rows, and then for the last two block rows shows that the invertibility of $F_3$ is equivalent to the invertibility of   
$$\left[ \arraycolsep=5pt\def\arraystretch{1} \begin{array}{cccccccc}
I    &      &      &        &   &&  &        \\ 
  & I   &      &        &       & &  &    \\ 
       &  &      &         &    & A^2B &  AB  & B\\ \hline
   &      &    I  &        &   &&  &        \\ 
  &   &      &  I      &       & &  &    \\ 
     &  &            &    &    & A^2C &  AC  & C\\ \hline
  &    &      &        &   I    & &  &    \\ 
     &  &      &         &    & A^2 &  A  & I\\ 
 \end{array} \right],$$ which is then equivalent to the invertibility of $N_3$. 
 
Thus if $A,B,$ and $C$ are square matrices of size $\leq 2$, then $F_3$ is always singular. However, there exists a particular choice of $3 \times 3$ matrices, i.e, (\ref{ABC}), for which $F_3$ is invertible. We can write $F_3$ as $X_1\otimes I+X_2\otimes A+X_3\otimes B+X_4\otimes C$
and consider $X=(X_1,X_2,X_3,X_4)\in \M_{8,8}^4\setminus {\mathcal N}(8,4).$

 The above discussion shows that $X$ satisfies the conditions of 
 Lemma~\ref{obvious} for $n = 8, d= 3$, and so we get $\delta(8) \geq 3$.

For the general construction, consider $$N_d =  \left[ \arraycolsep=5pt\def\arraystretch{1} \begin{array}{ccccc}
A^{d-1}B_1   & A^{d-2}B_1     &  \cdots     & B_1  \\
 A^{d-1}B_2  &   \ddots               &                 & B_2  \\
 \vdots      &       &    \ddots  &\vdots   \\
A^{d-1}B_{d-1}  &       \cdots       & \cdots               & B_{d-1} \\
A^{d-1} & \cdots & \cdots & I \\
 \end{array} \right],$$
     
 where $A,B_i$ are taken to be arbitrary $k \times k$ matrices. If $k < d$, then the characteristic polynomial of $A$ gives a linear dependency on the columns. On the other hand, choose $A$ to be a diagonal $d\times d$ matrix with pairwise distinct diagonal entries $\lambda_1,\lambda_2, \dots,\lambda_d$, choose $B_1$ to be the permutation matrix corresponding to the long cycle in the symmetric group on $d$ letters, and choose $B_i = B_1^i$. Similar to the case of $N_3$, we can permute the rows and columns to transform it into a block diagonal matrix, where each diagonal block is a Vandermonde matrix, and hence invertible. 

Similiar to the construction of $F_3$, we construct $F_d$ and this has size $d^2 - 1 \times d^2 -1$. To do this, we define an $n \times n-1$ block matrix $\mathcal{P}_n(A)$ and an $n-1 \times n$ block matrix $\mathcal{Q}_n(A)$ by  $$\mathcal{P}_n(A) =  \left[ \arraycolsep=5pt\def\arraystretch{1} \begin{array}{ccccc}
I & & & \\
-A & I & & \\ 
&   \ddots & \ddots  &\\
&   & -A & I \\
&   & & -A \\
\end{array} \right], \text{ and } \mathcal{Q}_n(A) = \left[ \arraycolsep=5pt\def\arraystretch{1} \begin{array}{cccccc}
 A & 0 & & & \\
   & A & \ddots & &  \\
   & & \ddots & 0 & \\
   & &           &  A & I\\ 
\end{array} \right]. $$

Notice that $F_3$ is just the block matrix $$ \left[ \arraycolsep=5pt\def\arraystretch{1} \begin{array}{cccc}
\mathcal{P}_3(A) & & & I_3 \otimes B \\
& \mathcal{P}_3(A) & & I_3 \otimes C \\
& & \mathcal{P}_2(A) & \mathcal{Q}_3(A) \\ \end{array} \right], $$ where $I_3$ denotes the identity matrix of size $3 \times 3$.  Now we define $$F_d = \left[ \arraycolsep=5pt\def\arraystretch{1} \begin{array}{cccccc}
\mathcal{P}_d(A) & & & & & I_d \otimes B_1 \\
& \mathcal{P}_d(A) & & & & I_d \otimes B_2 \\
& & \ddots  &&& \vdots \\
& & & \mathcal{P}_{d}(A) & & I_d \otimes B_{d-1} \\
& & & & \mathcal{P}_{d-1}(A) & \mathcal{Q}_d(A)
\end{array} \right], $$ where $I_d$ denotes the identity matrix of size $d \times d$. We can write 
$$F_d=X_1\otimes I+X_2\otimes A+X_3\otimes B_1+\cdots +X_{d+1}\otimes B_{d-1}$$ and
we  consider 
$$X=(X_1,X_2,\dots,X_{d+1})\in \M_{d^2-1,d^2-1}^{d+1}\setminus {\mathcal N}(d^2-1,d+1).$$

A similar argument as in the case of $d =3$, shows that the invertibility of $F_d$ is equivalent to the invertibilty of $N_d$. Thus, by Lemma~\ref{obvious}, we have $\delta(d^2 - 1,d+1) \geq d$
and therefore $\delta(d^2-1)\geq d$.  Replacing $d^2 - 1$ by $n$, we get  $\delta(n)\geq \lfloor \sqrt{n+1}\rfloor$ and $\gamma(n)=n\delta(n)\geq n\lfloor \sqrt{n+1}\rfloor$.

\section{Generating invariants for quiver representations}\label{sec:Quivers}
In this section, we generalize our degree bounds for matrix invariants to quiver representations. We start by introducing the common terminology.
A quiver is just a directed graph. Formally a quiver is a pair
$Q=(Q_0,Q_1)$, where $Q_0$ is a finite set of vertices and $Q_1$ is a finite set of arrows. For an arrow $a\in Q_1$ we denote its head and tail by $ha$ and $ta$ respectively. A path of length $k$ is a sequence $p=a_ka_{k-1}\cdots a_1$ where $a_1,\dots,a_k$ are arrows such that $ha_{i-1}=ta_i$ for $i=2,3,\dots k$. The head and tail of the path are defined by $hp=ha_{k}$ and $tp=ta_1$ respectively.
For every vertex $x\in Q_0$ we also have a trivial path $\varepsilon_x$ of length $0$ such that $h\varepsilon_x=t\varepsilon_x=x$.
A cyclic path is a path $p$ of positive length such that $hp=tp$. We will assume that $Q$ has no cyclic paths.

We fix an infinite field $K$. A representation $V$ of $Q$ over $K$ is a collection of finite dimensional $K$-vector spaces $V(x)$, $x\in Q_0$ together with a collection of $K$-linear maps $V(a):V(ta)\to V(ha)$, $a\in Q_1$. The dimension vector of $V$ is the function $\alpha:Q_0\to \N$ such that
$\alpha(x)=\dim V(x)$ for all $x \in Q_0$. If $p=a_ka_{k-1}\cdots a_1$ is a path, then we define
$$
V(p)=V(a_k)V(a_{k-1})\cdots V(a_1):V(tp)\to V(hp).
$$
We define $V(\varepsilon_x)$ is the identity map from $V(x)$ to itself.
For a dimension vector $\alpha\in \N^{Q_0}$, we define its representation space by:
$$\Rep(Q,\alpha)=\prod_{a\in Q_1}\M_{\alpha(ha),\alpha(ta)}.$$ 
 If $V$ is a representation with dimension vector $\alpha$
and we identify $V(x)\cong K^{\alpha(x)}$ for all $x$, then $V$ can be viewed as an element of $\Rep(Q,\alpha)$.
Consider the group $\GL(\alpha)=\prod_{x\in Q_0} \GL_{\alpha(x)}$ and its subgroup $\Sl(\alpha)=\prod_{x\in Q_0}\Sl_{\alpha(x)}$.
The group $\GL(\alpha)$ 
acts on $\Rep(Q,\alpha)$ by:
$$
(A(x)\mid x\in Q_0)\cdot (V(a)\mid a\in Q_1)=(A(ha)V(a)A(ta)^{-1}\mid a\in Q_1).
$$
For $V\in \Rep(Q,\alpha)$, choosing a different basis means acting by the group $\GL(\alpha)$. The $\GL(\alpha)$-orbits in $\Rep(Q,\alpha)$ correspond to isomorphism classes of representations of dimension $\alpha$.
The group $\GL(\alpha)$ also acts (on the left)  on the ring $K[\Rep(Q,\alpha)]$ of polynomial functions on $\Rep(Q,\alpha)$ by
$$
A\cdot f(V)=f(A^{-1}\cdot V)
$$
where $f\in K[\Rep(Q,\alpha)]$, $V\in \Rep(Q,\alpha)$ and $A\in \GL(\alpha)$.

The invariant ring $\SI(Q,\alpha)=K[\Rep(Q,\alpha)]^{\Sl(\alpha)}$ is called the ring of semi-invariants.
A multiplicative character of the group $\GL_\alpha$ is of the form
$$
\chi_\sigma:(A(x)\mid x\in Q_0)\in \GL_\alpha\mapsto \prod_{x\in Q_0}\det(A(x))^{\sigma(x)}\in K^\star,
$$
where $\sigma:Q_0\to \Z$ is called the weight of the character $\chi_\sigma$. Define 
$$\SI(Q,\alpha)_\sigma=\{f\in K[\Rep(Q,\alpha)]\mid \forall A\in \GL(\alpha)\ A\cdot f=\chi_\sigma(A) f\}.$$
Then we have $\SI(Q,\alpha)=\bigoplus_{\sigma}\SI(Q,\alpha)_\sigma$. If $\sigma\cdot \alpha=\sum_{x\in Q_0}\sigma(x)\alpha(x)\neq 0$, then $\SI(Q,\alpha)_\sigma=0$. Assume that $\sigma\cdot \alpha=0$. We can write $\sigma=\sigma_+-\sigma_-$ where $\sigma_+(x)=\max\{\sigma(x),0\}$ and $\sigma_-(x)=\max\{-\sigma(x),0\}$. Define $n=\sigma_+\cdot \alpha=\sigma_-\cdot \alpha$.

Now we define a linear matrix $n\times n$
$$A:\bigoplus_{x\in Q_0}V(x)^{\sigma_+(x)}\to \bigoplus_{x\in Q_0}V(x)^{\sigma_-(x)}
$$
where each block $\Hom(V(x),V(y))$ is of the form $t_1V(p_1)+\cdots+t_rV(p_r)$ where $t_1,t_2,\dots,t_r$ are indeterminates and $p_1,p_2,\dots,p_r$ are all paths from $x$ to $y$.
We use different indeterminates for the different blocks, so the  linear matrix has $m=\sum_{x\in Q_0}\sum_{y\in Q_0} \sigma_+(x)b_{x,y}\sigma_-(y)$ indeterminates where $b_{x,y}$ is the number of paths from $x$ to $y$. We can write $A=t_1X_1+\cdots+t_mX_m$ with $X_1,\dots,X_m\in \M_{n,n}$.
We have the following result (see~\cite[Corollary 3]{DW}, \cite{DZ} and  \cite{SVd}).
\begin{theorem}
The space $\SI(Q,\alpha)_\sigma$ is spanned by $\det(t_1X_1+\cdots+t_mX_m)$ with $t_1,\dots,t_m\in K$.
\end{theorem}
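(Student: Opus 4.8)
The displayed statement is the First Fundamental Theorem for rings of semi-invariants of quivers, and I would prove it as two inclusions. \emph{Easy inclusion.} Fix $t\in K^m$ and put $g_t:=\det(t_1X_1+\cdots+t_mX_m)\in K[\Rep(Q,\alpha)]$. Since $Q$ has no oriented cycles, for a path $p=a_k\cdots a_1$ the map $V\mapsto V(p)$ telescopes under the $\GL(\alpha)$-action, $V(p)(A\cdot V)=A(hp)V(p)A(tp)^{-1}$; as the block of the linear matrix in the target copy at $y$ and the source copy at $x$ equals $\sum_{p:\,x\to y}t_pV(p)$, the whole matrix transforms as $A(A\cdot V)=\widehat A_-\,A(V)\,\widehat A_+^{-1}$ with $\widehat A_\pm=\bigoplus_{x\in Q_0}A(x)^{\oplus\sigma_\pm(x)}$. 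Taking determinants, $g_t(A\cdot V)=\bigl(\prod_x\det A(x)^{\sigma_-(x)-\sigma_+(x)}\bigr)g_t(V)=\chi_{-\sigma}(A)g_t(V)$, which (recalling $A\cdot f(V)=f(A^{-1}V)$) says exactly $g_t\in\SI(Q,\alpha)_\sigma$. Also $\SI(Q,\alpha)_\sigma$ is finite-dimensional: the central torus of $\GL(\alpha)$ confines the multidegree in the arrow variables of a $\chi_\sigma$-eigenvector to one coset of $\{(m_a):\sum_{ha=x}m_a=\sum_{ta=x}m_a\ \forall x\}$, and this coset meets $\N^{Q_1}$ in a finite set because $Q$ is acyclic. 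As $K$ is infinite, it then suffices to show that $\{g_t:t\in K^m\}$ spans $\SI(Q,\alpha)_\sigma$, equivalently that the coefficients of $t\mapsto g_t$ span it.

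\emph{Spanning.} I would recognize $g_t$ categorically and reduce to the generalized Kronecker quiver, where Theorem~\ref{theo:gens} applies. As $Q$ is acyclic, $KQ$ is hereditary, $\Hom_{KQ}(KQ\varepsilon_x,V)\cong V(x)$ naturally, and the linear matrix with the indeterminates specialized to $t$ is exactly $\Hom_{KQ}(d_t,V)$ for the morphism of projectives $d_t\colon\bigoplus_xP_x^{\oplus\sigma_-(x)}\to\bigoplus_xP_x^{\oplus\sigma_+(x)}$ ($P_x=KQ\varepsilon_x$) whose matrix of path elements has the $t_i$ as coefficients. Hence $g_t(V)=\det\Hom_{KQ}(d_t,V)$ is the Derksen--Weyman determinantal semi-invariant $c^{W_t}$ of $W_t:=\operatorname{coker}(d_t)$; for generic $t$, $d_t$ is injective and $W_t$ has a fixed dimension vector $\beta$ with $\langle\beta,\alpha\rangle=0$ and $\sigma=\langle\beta,-\rangle$. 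Two projective presentations of one module differ only by trivial summands $P_x\xrightarrow{\ \cong\ }P_x$, which rescale $\det\Hom_{KQ}(-,V)$ by a nonzero scalar, $c^W$ depends only on the isomorphism class of $W$, and $c^{W\oplus W'}=c^Wc^{W'}$; using the standard dictionary one arranges $\beta$ so that the generic $\beta$-dimensional representation has minimal projective presentation of shape exactly $(\sigma_-,\sigma_+)$, i.e. equals $W_t$ for a suitable $t$. It remains to prove the FFT proper — $\SI(Q,\alpha)_\sigma=\operatorname{span}\{c^W:W\in\Rep(Q,\beta)\}$ — and to note that the $W_t$ are Zariski-dense among such $W$, so $\{c^{W_t}\}$ and $\{c^W\}$ span the same space.

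\emph{The FFT and the main obstacle.} For this last point I would use the Cauchy decomposition $K[\Rep(Q,\alpha)]=\bigotimes_{a}K[\M_{\alpha(ha),\alpha(ta)}]=\bigoplus_{(\lambda_a)}\bigotimes_a S_{\lambda_a}V(ha)\otimes S_{\lambda_a}V(ta)^\ast$ of $\GL(\alpha)$-modules; the $\chi_\sigma$-eigenspace is a sum over tuples of partitions of $\bigotimes_x\Hom_{\GL_{\alpha(x)}}\!\bigl(\det^{\sigma(x)},\bigotimes_{ha=x}S_{\lambda_a}V(x)\otimes\bigotimes_{ta=x}S_{\lambda_a}V(x)^\ast\bigr)$, and by the First Fundamental Theorem for $\GL_{\alpha(x)}$ (twisted for $\Sl_{\alpha(x)}$) each such space is spanned by iterated contractions and $\varepsilon$-tensors. (This FFT is exactly \cite[Corollary~3]{DW}, \cite{DZ}, \cite{SVd}; for $Q=\theta(m)$ with dimension vector $(n,n)$ it collapses, via $f_Tf_{T'}=f_{T\oplus T'}$, to Theorem~\ref{theo:gens}.) The genuine obstacle is precisely the gluing: one must show that the local $\varepsilon$/contraction data chosen at all vertices always assemble into a \emph{single} determinant of a matrix of path-maps, i.e. into some $g_t$ — that determinants suffice and no "new" combinations survive — which is where the hereditary structure (no relations among the $V(p)$ beyond composition) is essential. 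A secondary necessary check is the genericity and shape matching in the second paragraph: that the generic $\beta$-dimensional representation really has a presentation of shape $(\sigma_-,\sigma_+)$, and that the generic $d_t$ is injective whenever $\SI(Q,\alpha)_\sigma\neq 0$ (otherwise all $g_t$ vanish and cannot span a nonzero weight space).
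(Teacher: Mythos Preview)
The paper does not prove this theorem; it is quoted from the literature with the attribution ``see~\cite[Corollary~3]{DW}, \cite{DZ} and \cite{SVd}'' immediately before the statement. There is therefore no in-paper argument to compare against, and in fact Theorem~\ref{theo:gens} (which you invoke) is presented in Section~\ref{intro} as the specialization of \emph{this} result to the Kronecker quiver, so it cannot serve as an independent input.

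Your outline is nonetheless the right shape and tracks the Derksen--Weyman proof fairly closely: the transformation law for the block matrix of path maps gives the easy inclusion, the identification $g_t=c^{W_t}$ with $W_t=\operatorname{coker}(d_t)$ is correct, and the Cauchy decomposition combined with the classical FFT for each $\GL_{\alpha(x)}$ is exactly how \cite{DW} proceeds. You have, however, explicitly labeled the substantive step---that the local $\GL_{\alpha(x)}$-invariant data assemble into a single determinant of a matrix of path maps---as an ``obstacle'' rather than carried it out, and that step \emph{is} the theorem. One further caution: your passage through a dimension vector $\beta$ with $\sigma=\langle\beta,-\rangle$ is delicate, because although the Euler form gives a $\Z$-linear isomorphism $\Z^{Q_0}\to\Z^{Q_0}$ for acyclic $Q$, the resulting $\beta$ need not lie in $\N^{Q_0}$, so ``the generic $\beta$-dimensional representation'' may not make sense; the cited proofs handle this either by allowing virtual objects (as in \cite{SVd}) or by working directly with projective presentations of the prescribed shape $(\sigma_-,\sigma_+)$, which is in effect what the paper's formulation via the explicit block matrix $A$ does.
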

\begin{corollary}
For any positive integer $d$, the space $\SI(Q,\alpha)_{d\sigma}$ is spanned by $\det(X_1\otimes T_1+\cdots+X_m\otimes T_m)$ with $T_1,\dots,T_m\in \M_{d,d}$.
\end{corollary}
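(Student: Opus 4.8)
The plan is to deduce this from the Theorem just stated by applying it to the weight $d\sigma$ in place of $\sigma$, and then recognizing the resulting linear matrix as a tensor substitution into $A$.

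First I would unwind what the Theorem gives for the weight $d\sigma$ (with the same quiver $Q$ and dimension vector $\alpha$). Since $(d\sigma)_+ = d\sigma_+$ and $(d\sigma)_- = d\sigma_-$, we get $(d\sigma)_+\cdot\alpha = dn$, so the associated linear matrix $A'$ is of size $dn\times dn$; its $\Hom(V(x),V(y))$-blocks are filled with $K$-linear combinations of the maps $V(p)$, $p$ a path from $x$ to $y$, now using $m' = \sum_{x,y}d\sigma_+(x)\,b_{x,y}\,d\sigma_-(y) = d^2 m$ distinct indeterminates $s_1,\dots,s_{m'}$, so that $A' = s_1 Y_1+\cdots+s_{m'}Y_{m'}$ with $Y_i\in\M_{dn,dn}$. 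The Theorem then asserts that $\SI(Q,\alpha)_{d\sigma}$ is spanned by the polynomials $\det(s_1Y_1+\cdots+s_{m'}Y_{m'})$ with $s_i\in K$.

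The key step will be to identify $A'$, up to a simultaneous permutation of its rows and columns, with $\sum_{j=1}^m X_j\otimes T_j$ where $T_1,\dots,T_m$ are ``generic'' $d\times d$ matrices of indeterminates. For this I would use the canonical isomorphisms $V(x)^{d\sigma_\pm(x)}\cong V(x)^{\sigma_\pm(x)}\otimes K^d$: reindex the copies of $V(x)$ in the source and target of $A'$ by pairs $(a',c)$ with $1\le a'\le\sigma_+(x)$ (resp. $\sigma_-(x)$) and $1\le c\le d$, and reorder the basis so that the $K^d$-index varies fastest. This reordering is conjugation of $A'$ by permutation matrices and therefore changes $\det(A')$ only by a sign. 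In the reordered basis, the $d\times d$ block of $A'$ sitting over the $(i',i)$-entry of the $((x,a'),(y,b'))$-block of $A$ has $(e,c)$-entry $\sum_\ell (V(p_\ell))_{i',i}\,s^{(x,y,(a',c),(b',e))}_\ell$; collecting, for each index $j\leftrightarrow(x,y,a',b',\ell)$ of $A$, the indeterminate $s^{(x,y,(a',c),(b',e))}_\ell$ into the $(e,c)$-entry of a $d\times d$ matrix $T_j$ exhibits this block as $\sum_\ell (V(p_\ell))_{i',i}T_j$, which is precisely the block of $\sum_j X_j\otimes T_j$ in that position (with the Kronecker convention of the paper, the $(r,s)$ scalar block of $X_j\otimes T_j$ is $(X_j)_{rs}T_j$). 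Because the $s_i$ are $d^2m$ distinct indeterminates, each $T_j$ has algebraically independent entries and all $d^2 m$ entries occurring are distinct, so the $T_j$ are generic; in particular, letting the $s_i$ range over $K$ is exactly the same as letting each $T_j$ range over $\M_{d,d}(K)$.

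Combining the two steps, the spanning set $\{\det(s_1Y_1+\cdots+s_{m'}Y_{m'}) : s_i\in K\}$ for $\SI(Q,\alpha)_{d\sigma}$ coincides, up to multiplying individual elements by $\pm 1$, with $\{\det(X_1\otimes T_1+\cdots+X_m\otimes T_m) : T_1,\dots,T_m\in\M_{d,d}(K)\}$, and rescaling spanning vectors does not change the span; this is the desired statement. I expect the only real work to lie in the middle step: carefully matching the block structure of $A'$ with the Kronecker product $\sum_j X_j\otimes T_j$, keeping the ordering conventions of the tensor product straight, and verifying the genericity of the $T_j$. Everything else is a direct invocation of the Theorem.
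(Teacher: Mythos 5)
Your proposal is correct and takes essentially the same approach as the paper's own (one-line) proof, which simply says that the result follows by running the construction for $d\sigma$ in place of $\sigma$. You have fleshed out the details the paper leaves implicit---identifying $V(x)^{d\sigma_\pm(x)}$ with $V(x)^{\sigma_\pm(x)}\otimes K^d$, reordering rows and columns so that the $K^d$-index varies fastest to recover the Kronecker block structure, and repackaging the $d^2m$ indeterminates into $m$ generic $d\times d$ matrices $T_j$---and this bookkeeping is accurate.
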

\begin{proof}
This follows from the construction for $d\sigma$ instead of $\sigma$.
\end{proof}

\begin{corollary}
We have a surjective ring homomorphism $\psi: K[\M_{n,n}^m]^{\SL}\to \SI(Q,\alpha)$ which sends homogeneous elements of degree $dn$ into $\SI(Q,\alpha)_{d\sigma}$.
\end{corollary}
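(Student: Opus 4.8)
The plan is to exhibit the surjective ring homomorphism explicitly and then check that it respects the claimed grading. Recall that $R(n,m) = K[\M_{n,n}^m]^{\SL}$ is generated by the invariants $f_T(X_1,\dots,X_m) = \det(X_1\otimes T_1 + \cdots + X_m \otimes T_m)$ for $T \in \M_{d,d}^m$, $d \geq 1$, by Theorem~\ref{theo:gens}; and by the preceding Corollary, $\SI(Q,\alpha)_{d\sigma}$ is spanned by the elements $g_T := \det(X_1 \otimes T_1 + \cdots + X_m \otimes T_m)$ where now $X_1,\dots,X_m \in \M_{n,n}$ are the fixed matrices coming from the linear matrix $A = t_1X_1 + \cdots + t_mX_m$ associated to $(Q,\alpha,\sigma)$, and $T_1,\dots,T_m \in \M_{d,d}$. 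The point is that both rings are built out of "determinants of the same shape", with $m$ playing the same role on both sides, so there is a natural map.

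Concretely, first I would set up the universal picture: let $\xi_1,\dots,\xi_m$ be generic $n\times n$ matrices of indeterminates, so $R(n,m) = K[\xi_1,\dots,\xi_m]^{\SL} \subseteq K[\{(\xi_k)_{ij}\}]$. Define $\psi : R(n,m) \to \SI(Q,\alpha)$ as the restriction of the $K$-algebra homomorphism $K[\{(\xi_k)_{ij}\}] \to K[\Rep(Q,\alpha)]$ that substitutes for the entries of $\xi_k$ the corresponding linear-in-$V(a)$ expressions, i.e. the coefficient functions making $\sum_k (\xi_k \to X_k)$ reproduce the block linear matrix $A$. Equivalently, $\psi$ is "plug the actual $X_k$'s into the generic ones." Then I would verify two things: (i) $\psi$ lands inside $\SI(Q,\alpha)$ — this is because the entries of each $X_k$ are paths $V(p)$, and the determinant-type construction is exactly arranged (via the $\sigma_+, \sigma_-$ block structure) so that $\SL(\alpha)$ acts trivially on the image of an $\SL$-invariant; more precisely $\psi$ intertwines a suitable homomorphism $\Sl(\alpha) \to \SL$ (or at least the relevant invariance), so $G$-invariants map to $\Sl(\alpha)$-invariants. (ii) Surjectivity: since $\SI(Q,\alpha) = \bigoplus_\sigma \SI(Q,\alpha)_\sigma$ and each graded piece $\SI(Q,\alpha)_{\sigma}$ is zero unless $\sigma = d\sigma_0$ is a nonnegative multiple of the chosen weight — wait, here one must be a little careful, since in general $\SI(Q,\alpha)$ is graded by the whole weight lattice; I would state the corollary for the subring $\SI(Q,\alpha,\sigma) = \bigoplus_d \SI(Q,\alpha)_{d\sigma}$ if that is what is meant, or else note that the $f_T$ with varying number of matrices $m$ and the construction for different weights together hit everything. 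For the fixed weight, $\SI(Q,\alpha)_{d\sigma}$ is spanned by the $g_T$, and each $g_T = \psi(f_T)$ by construction (the determinant of $X_1 \otimes T_1 + \cdots$ is literally obtained from $\det(\xi_1 \otimes T_1 + \cdots)$ by the substitution defining $\psi$), so $\psi$ is surjective onto $\bigoplus_d \SI(Q,\alpha)_{d\sigma}$.

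The grading statement then falls out: $f_T$ is homogeneous of degree $dn$ in the entries of the $\xi_k$ (each of the $dn$ rows of the $dn \times dn$ matrix $\xi_1\otimes T_1 + \cdots$ contributes one factor linear in the $\xi$'s to the determinant), and $\psi$ applied to it gives $g_T \in \SI(Q,\alpha)_{d\sigma}$. Since $R(n,m)_{dn}$ is spanned by products of $f_T$'s of total degree $dn$ — more carefully, by Theorem~\ref{theo:gens} $R(n,m)$ is generated by the $f_T$'s, each of degree a multiple of $n$, so $R(n,m)_{dn}$ is spanned by monomials $f_{T^{(1)}}\cdots f_{T^{(s)}}$ with $\sum d_i = d$ — and $\psi$ is multiplicative, the image of such a monomial lies in $\SI(Q,\alpha)_{d_1\sigma}\cdots\SI(Q,\alpha)_{d_s\sigma} \subseteq \SI(Q,\alpha)_{d\sigma}$. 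Hence $\psi(R(n,m)_{dn}) \subseteq \SI(Q,\alpha)_{d\sigma}$, which is the assertion.

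The main obstacle I anticipate is getting the intertwining of group actions exactly right — i.e. producing the honest group homomorphism (or relation) from $\Sl(\alpha) = \prod_x \Sl_{\alpha(x)}$ to $\SL = \Sl_n \times \Sl_n$ under which $A = \sum t_k X_k$ is equivariant. This requires unwinding the block decomposition $\bigoplus_x V(x)^{\sigma_+(x)} \to \bigoplus_x V(x)^{\sigma_-(x)}$: an element $(g(x))_x \in \Sl(\alpha)$ acts on the source by $\bigoplus_x g(x)^{\oplus \sigma_+(x)}$ (block-diagonal, hence determinant $\prod_x \det g(x)^{\sigma_+(x)} = 1$ on $\Sl(\alpha)$, using $\sigma_+\cdot\alpha$ entries) and similarly on the target, and one checks each block $t_1V(p_1)+\cdots$ transforms correctly because $V$ transforms by conjugation along paths. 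Once this compatibility is pinned down, everything else is bookkeeping, and the degree claim is immediate from counting rows in the Kronecker product. I would therefore devote the bulk of the write-up to (i) and treat (ii) and the grading as short consequences.
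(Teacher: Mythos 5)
Your proposal is correct, and it is the intended argument: the paper states this corollary without proof, and the natural proof is exactly the substitution map you describe. Concretely, $\psi(f)(V)=f(X_1(V),\dots,X_m(V))$ where the $X_k$ are the coefficient matrices of the linear matrix $A$; this lands in the $\Sl(\alpha)$-invariants because an element $g=(g(x))_x\in\Sl(\alpha)$ transforms each path $V(p)$ by $V(p)\mapsto g(hp)V(p)g(tp)^{-1}$, hence transforms $A$ (and each $X_k$) by the block-diagonal matrices $P(g)=\bigoplus_x g(x)^{\oplus\sigma_-(x)}$ on the left and $Q(g)=\bigoplus_x g(x)^{\oplus\sigma_+(x)}$ on the right, and the relations $\sigma_+\cdot\alpha=\sigma_-\cdot\alpha=n$ guarantee both are $n\times n$ with determinant $\prod_x\det g(x)^{\sigma_\pm(x)}=1$, so $(P(g),Q(g))\in\Sl_n\times\Sl_n$. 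Your reservation about the codomain is well placed and is the one imprecision in the statement as printed: the image of a single $\psi$ is the subring $\SI(Q,\alpha,\sigma)=\bigoplus_{d\geq 0}\SI(Q,\alpha)_{d\sigma}$, not all of $\SI(Q,\alpha)$, and surjectivity should be read as onto that subring; the ``varying $m$'' alternative you float would require a different map for each weight, so the first reading is the right one. Surjectivity then follows because $\psi(f_T)=\det\bigl(\sum_i X_i\otimes T_i\bigr)$ and these span each $\SI(Q,\alpha)_{d\sigma}$ by the preceding corollary, and the degree-to-weight claim follows as you say from $\deg f_T=dn$, $\psi(f_T)\in\SI(Q,\alpha)_{d\sigma}$, multiplicativity of $\psi$, and additivity of weights under multiplication.
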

A representation $V\in \Rep(Q,\alpha)$ is called $\sigma$-semistable if there exists an semi-invariant $f\in \SI(Q,\alpha)_{d\sigma}$ with $f(V)\neq 0$ (see~\cite{King}).
\begin{corollary}\label{existssemi}
If $V$ is $\sigma$-semistable, $n=\sum_{x\in Q_0}\sigma_+(x)\alpha(x)$ and $d\geq n-1$, then there exists an semi-invariant $f\in \SI(Q,\alpha)_{d\sigma}$ with $f(V)\neq 0$.
\end{corollary}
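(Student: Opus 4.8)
The plan is to reduce \Cref{existssemi} to \Cref{main} by passing from the quiver side to the matrix side through the coefficient matrices $X_1,\dots,X_m\in\M_{n,n}$ of the linear matrix $A$ attached to $(Q,\alpha,\sigma)$.

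First I would fix a representation $V\in\Rep(Q,\alpha)$ and record that evaluating the $X_i$ on $V$ produces an $m$-tuple $X(V)=(X_1(V),\dots,X_m(V))\in\M_{n,n}^m$ (the entries of $X_i(V)$ are the coefficients of the path maps $V(p)$ occurring in the blocks of $A$), so that $V\mapsto X(V)$ is a polynomial map $\Rep(Q,\alpha)\to\M_{n,n}^m$. The corollary expressing $\SI(Q,\alpha)_{d'\sigma}$ as the span of the functions $\det(X_1\otimes T_1+\cdots+X_m\otimes T_m)$ with $T_i\in\M_{d',d'}$, for every positive integer $d'$, then says after evaluation at $V$ that $\SI(Q,\alpha)_{d'\sigma}$ contains a semi-invariant nonvanishing at $V$ exactly when $f_T(X(V))=\det\bigl(\sum_i X_i(V)\otimes T_i\bigr)\neq 0$ for some $T\in\M_{d',d'}^m$. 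By \Cref{fAexists} this happens iff $r(d',d')=d'n$ for the span $\mathcal{X}$ of $X_1(V),\dots,X_m(V)$. Letting $d'$ range, I conclude that $V$ is $\sigma$-semistable precisely when $X(V)\notin\mathcal{N}(n,m)$.

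Second, assuming $V$ is $\sigma$-semistable with $n\geq 2$, I would apply \Cref{main} to the tuple $X(V)$: since $d\geq n-1$ there is $T=(T_1,\dots,T_m)\in\M_{d,d}^m$ with $f_T(X(V))\neq 0$, and the function $\det\bigl(\sum_i X_i\otimes T_i\bigr)$ lies in $\SI(Q,\alpha)_{d\sigma}$ and is nonzero at $V$, giving the desired $f$. The degenerate cases $n=0$ (take $f=1\in\SI(Q,\alpha)_{d\sigma}$) and $n=1$ (semistability means some $X_i(V)\neq 0$ in $\M_{1,1}^m$, and $T_i=I_d$ with all other $T_j=0$ gives $\det\bigl(\sum_i X_i(V)\otimes T_i\bigr)=X_i(V)^d\neq 0$) I would dispose of directly, since \Cref{main} requires $n\geq 2$.

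I do not expect a serious obstacle: the mathematical content is entirely in \Cref{main}, and the only work is the bookkeeping of the first two paragraphs — verifying that $\sigma$-semistability of $V$ is exactly non-membership of $X(V)$ in the null cone, and that the semi-invariant produced by \Cref{main} has weight precisely $d\sigma$ (which is why it is important that the determinantal spanning statement is available for the given $d$, not just for the $d'$ witnessing semistability). If anything is delicate it is only checking that the identification of points of $\Rep(Q,\alpha)$ with points of $\M_{n,n}^m$ is compatible with evaluation of these determinantal semi-invariants.
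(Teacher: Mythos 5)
Your proposal is correct and matches the paper's intended argument: the paper does not spell out a proof of Corollary~\ref{existssemi}, but the preceding corollaries in Section~\ref{sec:Quivers} (the determinantal spanning description of $\SI(Q,\alpha)_{d\sigma}$ and the surjection $\psi\colon K[\M_{n,n}^m]^{\SL}\to\SI(Q,\alpha,\sigma)$ coming from $V\mapsto X(V)$) set up exactly the reduction you describe, so that $\sigma$-semistability of $V$ is non-membership of $X(V)$ in $\mathcal{N}(n,m)$ and Theorem~\ref{main} produces the desired $T\in\M_{d,d}^m$. Your explicit handling of the degenerate cases $n\leq 1$, and your remark that what matters is the weight $d\sigma$ rather than any fixed degree in $V$ (since the entries of $X_i(V)$ are polynomial, not linear, for paths of length $>1$), are both accurate refinements of what the paper leaves implicit.
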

The ring $\SI(Q,\alpha,\sigma)=\bigoplus_{d\sigma}\SI(Q,\alpha)_{d\sigma}$ is graded, where $\SI(Q,\alpha)_{d\sigma}$ is the degree $d$ part.
\begin{corollary}
The ring $\SI(Q,\alpha,\sigma)$ is generated in degree $\leq n^5$ where $n=\sum_{x\in Q_0}\sigma_+(x)\alpha(x)$.
\end{corollary}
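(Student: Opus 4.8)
The plan is to realize $\SI(Q,\alpha,\sigma)$ as a graded quotient of the matrix semi-invariant ring $R(n,m)=K[\M_{n,n}^m]^{\SL}$ and to transport the degree bound $\beta(n,m)\le n^6$ across the quotient map, keeping careful track of how the two gradings correspond. No genuinely new input is needed beyond what has already been established; the work is in lining up the bookkeeping.

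First I would recall, from the corollary immediately preceding this one, the surjective ring homomorphism $\psi\colon K[\M_{n,n}^m]^{\SL}\to \SI(Q,\alpha)$ that carries the degree-$dn$ homogeneous component into $\SI(Q,\alpha)_{d\sigma}$, where $X_1,\dots,X_m\in\M_{n,n}$ are the fixed matrices assembled from the paths of $Q$ and $n=\sigma_+\cdot\alpha$, and $m=\sum_{x,y}\sigma_+(x)b_{x,y}\sigma_-(y)$. Since $R(n,m)$ vanishes in degrees not divisible by $n$ (as noted in the introduction, cf.\ Theorem~\ref{theo:gens}), $\psi$ in fact maps $R(n,m)$ onto the subring $\SI(Q,\alpha,\sigma)=\bigoplus_{d}\SI(Q,\alpha)_{d\sigma}$: surjectivity onto this subring is exactly the content of the spanning corollary, since $\SI(Q,\alpha)_{d\sigma}$ is spanned by the elements $\det(X_1\otimes T_1+\cdots+X_m\otimes T_m)=\psi(f_T)$ with $T_i\in\M_{d,d}$. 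Under this description the degree-$d$ piece of $\SI(Q,\alpha,\sigma)$ is precisely the image of $R(n,m)_{dn}$, so $\psi$ is graded once we rescale so that degree $d$ on the target corresponds to degree $dn$ on the source.

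Next I would invoke the corollary to Theorem~\ref{deg.bds}: for $\kar(K)=0$ the ring $R(n,m)$ is generated by invariants of degree $\le n^6$. Because $n\mid n^6$ and $R(n,m)$ has no homogeneous components in degrees indivisible by $n$, this means $R(n,m)$ is generated as a $K$-algebra by finitely many $f_T$ with $T\in\M_{d,d}^m$ and $dn\le n^6$, i.e.\ $d\le n^5$. Applying the surjective ring homomorphism $\psi$, the images $\psi(f_T)$ generate $\SI(Q,\alpha,\sigma)$ and each lies in degree $d\le n^5$. Hence $\SI(Q,\alpha,\sigma)$ is generated in degree $\le n^5$, which is the assertion.

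The one point deserving genuine care, and the closest thing to an obstacle, is the \emph{uniformity} of the bound $\beta(n,m)\le n^6$ in the number $m$ of matrices: here $m$ can be arbitrarily large compared to $n$, so one must use the all-$m$ form of the bound (obtained via Weyl's theorem reducing to $m=n^2$, as in the corollary to Theorem~\ref{deg.bds}) rather than the linear-in-$m$ estimate of Theorem~\ref{deg.bds} itself. The only other thing to state cleanly is the grading correspondence $d\leftrightarrow dn$, so that ``degree $\le n^6$ in $R(n,m)$'' translates into ``degree $\le n^5$ in $\SI(Q,\alpha,\sigma)$''; once that is made explicit the proof is a two-line consequence of the preceding corollary and the $n^6$ bound.
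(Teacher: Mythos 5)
Your proof is correct and is essentially the argument the paper intends (the paper leaves this corollary unproved, but it follows exactly as you describe from the preceding corollary about $\psi$ together with the bound $\beta(n,m)\le n^6$). You correctly identify the two points that need care — the grading correspondence $dn\leftrightarrow d$ and the necessity of the $m$-independent bound $n^6$ rather than $mn^4$, since $m$ here is the number of paths and can dwarf $n$ — and handle both properly.
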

 
 Let us consider again the Kronecker quiver $\theta(m)$, with dimension vector $\alpha=(p,q)$. Let $e=\gcd(p,q)$ and write $p=p'e$, $q=q'e$. Define $\sigma=(q',-p')$.  We have $n=pq'=p'q=pq/e=pq/\gcd(p,q)=\lcm(p,q)$.
 We have $\SI(Q,\alpha)=\bigoplus_{d=0}^\infty \SI(Q,\alpha)_{d\sigma}=K[\M_{p,q}^m]^{\Sl_p\times \Sl_q}$. The null cone in this case is the set of representations that are not $\sigma$-semistable  (see~\cite{King}).  From Corollary~\ref{existssemi} follows:
 \begin{corollary}
If $d\geq \lcm(p,q)-1$, then the null cone the action of $\Sl_p\times \Sl_q$ in $\M_{p,q}^m$, is defined by invariants of degree $\leq \lcm(p,q)d$.
 \end{corollary}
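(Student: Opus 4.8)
The plan is to read the statement off Corollary~\ref{existssemi} once the weight grading on $\SI(\theta(m),\alpha)$ is translated into the polynomial grading on $K[\M_{p,q}^m]$. First I would fix the notation of the paragraph preceding the statement: $e=\gcd(p,q)$, $p=p'e$, $q=q'e$, $\sigma=(q',-p')$, so that $\sigma_+=(q',0)$, $\sigma_-=(0,p')$, and $n:=\sigma_+\cdot\alpha=q'p=p'q=\lcm(p,q)$. Recall that $K[\M_{p,q}^m]^{\Sl_p\times\Sl_q}=\SI(\theta(m),\alpha)=\bigoplus_{d\geq 0}\SI(\theta(m),\alpha)_{d\sigma}$, and that the null cone of the $\Sl_p\times\Sl_q$-action is precisely the set of $V$ that fail to be $\sigma$-semistable.

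Next I would pin down the degree. Every $f\in\SI(\theta(m),\alpha)_{d\sigma}$ is a linear combination of determinants $\det(X_1\otimes T_1+\cdots+X_m\otimes T_m)$ with $T_i\in\M_{d,d}$, and $\sum_i X_i\otimes T_i$ is a $dn\times dn$ matrix whose entries are homogeneous linear forms in the coordinate functions on $\M_{p,q}^m$; hence each such determinant, when nonzero, is homogeneous of degree $dn=\lcm(p,q)\cdot d$. (Equivalently, this is what the surjection $\psi\colon K[\M_{n,n}^m]^{\SL}\to\SI(\theta(m),\alpha)$ of the preceding corollary records, since it carries the degree-$dn$ part onto $\SI(\theta(m),\alpha)_{d\sigma}$.) Thus a nonzero element of $\SI(\theta(m),\alpha)_{d\sigma}$ with $d\geq 1$ is a non-constant homogeneous invariant of degree exactly $\lcm(p,q)\cdot d$.

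Now fix $d\geq\lcm(p,q)-1=n-1$ and let $V\in\M_{p,q}^m$ be any point not in the null cone, i.e.\ $V$ is $\sigma$-semistable. Applying Corollary~\ref{existssemi} with this $n$ and this $d\geq n-1$ produces a semi-invariant $f\in\SI(\theta(m),\alpha)_{d\sigma}$ with $f(V)\neq 0$; by the previous paragraph $f$ is a non-constant homogeneous invariant of degree $\lcm(p,q)\cdot d$. Since $V$ was an arbitrary point outside the null cone, the non-constant homogeneous invariants of degree $\leq\lcm(p,q)\cdot d$ have no common zero outside the null cone, whereas by definition every homogeneous invariant vanishes on the null cone. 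Therefore the invariants of degree $\leq\lcm(p,q)\cdot d$ define the null cone.

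I do not expect a genuine obstacle: modulo the bookkeeping identifying $n$ with $\lcm(p,q)$ and the weight-$d\sigma$ component with the polynomial-degree-$dn$ component, the statement is simply the geometric reformulation of Corollary~\ref{existssemi}. The only point that deserves a word of care is the degenerate case $\lcm(p,q)=1$ (that is, $p=q=1$), where the bound should be read with $d\geq 1$; in every other case $d\geq n-1\geq 1$ holds automatically, so the semi-invariant $f$ above — having nonzero weight $d\sigma$ — is genuinely non-constant.
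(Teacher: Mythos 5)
Your proposal is correct and matches the paper's (implicit) argument: the paper states the corollary as an immediate consequence of Corollary~\ref{existssemi}, and what you have written is exactly the bookkeeping that turns the weight $d\sigma$ into polynomial degree $d\,n=d\,\lcm(p,q)$ and translates $\sigma$-semistability into lying outside the null cone. Your side remark about the degenerate case $p=q=1$ (so $\lcm(p,q)=1$ and the hypothesis allows $d=0$) is a fair observation that the paper leaves unstated.
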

 \begin{proof}[Proof of Theorem~\ref{pq}]
 Invariants of degree $\lcm(p,q)^2$ define the null-cone. By the Noether normalization lemma, we can find a homogeneous system of parameters in degree $\lcm(p,q)^2$. The number of elements in the homogeneous system of parameters is  $\dim K[\M_{p,q}^m]^{\Sl_p\times \Sl_q}\leq mpq$.
 So by Proposition~\ref{Popov}, the ring $K[\M_{p,q}^m]^{\Sl_p\times\Sl_q}$ is generated in degree $\leq mpq(\lcm(p,q))^2$. Again by a theorem of Weyl
 (see \cite[Section 7.1, Theorem A]{KP}), we may assume that $m\leq pq$.
 \end{proof}

 \section{Applications to algebraic complexity}\label{act}
We have already seen in the introduction that our results give a deterministic algorithm for the invertibility of a linear matrix over $\Q$. In \cite{HW}, Hrube\v s and Wigderson study non-commutative arithmetic circuits, and they comment that perhaps the most important problem that their work suggests is to find a good bound for $\delta(n)$. We describe the consequences of our bound for $\delta(n)$ in algebraic complexity. 

A non-commutative arithmetic circuit is a directed acyclic graph, whose vertices are called gates. Gates of in-degree $0$ are elements of $\K$ or variables $t_i$. The other allowed gates are inverse, addition and multiplication gates of in-degrees 1, 2 and 2 respectively. The edges going into an multiplication gate are labelled left and right to indicate the order of multiplication. 
A formula is a circuit, where every node has out-degree at most $1$. The number of gates in a circuit is called its size. 
A non-commutative rational function over $\K$ in the variables $t_1,t_2,\dots,t_m$ is an element of the skew field $L = \K \llangle t_1,t_2,\dots,t_m\rrangle$. A circuit $\Phi$ in the variables $t_1,t_2,\dots,t_m$ computes a non-commutative rational function for each output gate. 
We denote by $\widehat\Phi(T)$ the evaluation of $\Phi$ at $T = (T_1,T_2,\dots,T_m) \in \M_{p,p}^m$. In the process of evaluation, if the input of an inverse gate is not invertible, then $\widehat{\Phi}(T)$ is undefined. $\Phi$ is called a correct circuit if $\widehat\Phi(T)$ is defined for some $T$. 
For further details, we refer to \cite{HW}.

\begin{definition}
The number $w(n)$ is the smallest integer $d$ such that for every correct formula $\Phi$ of size $n$ (in the variables $t_1,t_2,\dots,t_m$), there exists $T \in \M_{p,p}^m$ with $p \leq d$ such that $\widehat\Phi(T)$ is defined. 
\end{definition}

We have $w(n) \leq \delta(n^2 + n)$ by \cite[Proposition~7.6]{HW}. However, due to the nature of our results, we can do even better. 

\begin{proposition}
We have $w(n) \leq 2n-1$. 
\end{proposition}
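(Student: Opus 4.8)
The plan is to feed the linearization of non-commutative formulas from \cite{HW} into the sharp bound on $\delta$ supplied by Theorem~\ref{main}; the gain over the bare estimate $w(n)\le \delta(n^2+n)$ comes from replacing the quadratic size estimate implicit in \cite[Proposition~7.6]{HW} by a linear one. Recall that the circuit-to-linear-matrix construction of \cite[Section~6]{HW} attaches to a correct formula $\Phi$ of size $n$ in variables $t_1,\dots,t_m$ a linear matrix $A_\Phi=t_1X_1+\cdots+t_mX_m$ with $X_1,\dots,X_m\in\M_{N,N}(\K)$ such that $\Phi$ is correct if and only if $A_\Phi$ is invertible over the free skew field $L=\K\llangle t_1,\dots,t_m\rrangle$, and, for every $p$ and every $T=(T_1,\dots,T_m)\in\M_{p,p}^m$, the evaluation $\widehat\Phi(T)$ is defined if and only if $A_\Phi(T):=X_1\otimes T_1+\cdots+X_m\otimes T_m$ is invertible. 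Used as a black box this gives $N\le n^2+n$; the first task is to show that one may take $N\le 2n$.

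For this I would re-examine the construction by induction on the structure of $\Phi$, tracking the size of the associated linear matrix. A variable leaf is represented by a $2\times 2$ linear matrix (since $t_i$ appears as an entry of the inverse of $\left[\begin{smallmatrix}0&1\\1&-t_i\end{smallmatrix}\right]$), a scalar leaf by a $1\times 1$ one, and each addition, multiplication, and inverse gate glues the linear matrices of its input(s) into a block matrix whose size is the \emph{sum} of the input sizes — there is no multiplicative blow-up — using elementary block-row and block-column operations and Schur complements. A formula with $n$ gates thus yields $N\le 2n$. At every step one checks that the two equivalences displayed above are preserved, and the only nontrivial point is the inverse gate: one must produce a linear representation of $r^{-1}$ from one of $r$ without inflating the size while maintaining the equivalence ``$\widehat\Phi(T)$ defined $\Leftrightarrow$ $A_\Phi(T)$ invertible''. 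Getting both this invariance and the precise constant $2$ to come out right is the main obstacle; the rest is formal.

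Finally, suppose $\Phi$ is a correct formula of size $n$ and let $A_\Phi$ be as above with $N\le 2n$. Since $\Phi$ is correct, $A_\Phi$ is invertible over $L$, so by Proposition~\ref{inv.skew} (together with the definition of the null cone) the tuple $X=(X_1,\dots,X_m)\in\M_{N,N}^m$ does not lie in $\mathcal{N}(N,m)$. If $N\ge 2$, apply Theorem~\ref{main} (with the role of $n$ there played by $N$) with $d=2n-1$, which satisfies $d\ge N-1$: there is $T=(T_1,\dots,T_m)\in\M_{2n-1,2n-1}^m$ with $f_T(X)=\det\bigl(X_1\otimes T_1+\cdots+X_m\otimes T_m\bigr)\ne 0$, i.e.\ $A_\Phi(T)$ is invertible, i.e.\ $\widehat\Phi(T)$ is defined with $p=2n-1$. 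The degenerate cases $N\le 1$ are immediate, since then $\Phi$ reduces to a single variable or a nonzero scalar and $\widehat\Phi(T)$ is already defined for a suitable $T\in\M_{1,1}^m$. This shows $w(n)\le 2n-1$.
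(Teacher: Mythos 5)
Your strategy matches the paper's in outline — linearize the formula via \cite{HW}, then apply the sharp bound of Proposition~\ref{red} / Theorem~\ref{main} in place of the na\"ive estimate $w(n)\le\delta(n^2+n)$ — but there is a real gap in the middle. You assert that the linearization produces a \emph{single} linear matrix $A_\Phi$ of size $\le 2n$ for which ``$\widehat\Phi(T)$ is defined $\Leftrightarrow$ $A_\Phi(T)$ is invertible.'' That biconditional is stronger than what \cite{HW} actually provides. What is proved there (\cite[Proposition~7.1]{HW}) is a one-directional statement about a whole family: a linear matrix $A_{\Phi_v}$ is attached to each gate $v$ of $\Phi$, and if $A_{\Phi_v}(T)$ is invertible \emph{for every gate} $v$, then $\widehat\Phi(T)$ is defined. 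Evaluability of $\widehat\Phi(T)$ is a local property (every inverse gate along the way must meet an invertible input), and the cited result is stated exactly at that level of locality. Your claim that invertibility of the single top-level matrix $A_\Phi(T)$ already forces all nested inverse gates to be evaluable would require a separate argument about the block structure of the \cite{HW} construction; it is not something you can simply cite. This is precisely what the paper's proof avoids by working with all the $A_{\Phi_v}$ at once: by Remark~\ref{Z-dense}, for each gate $v$ the locus of $T\in\M_{2n-1,2n-1}^m$ making $A_{\Phi_v}(T)$ invertible is a nonempty Zariski-open set, and a finite intersection of such sets is still nonempty (here the field being infinite matters), so one generic $T$ works simultaneously for every gate. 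Your proof never invokes this simultaneity and so, as written, does not establish that $\widehat\Phi(T)$ is defined.

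A secondary issue: you propose to re-derive the size bound $N\le 2n$ by a structural induction, and you explicitly flag that ``getting both this invariance and the precise constant $2$ to come out right is the main obstacle; the rest is formal.'' There is no need to re-prove this — the paper just cites \cite[Theorem~2.5]{HW} for the size bound and \cite[Corollary~7.2]{HW} for the equivalence between correctness of $\Phi$ and invertibility of the $A_{\Phi_v}$ over the skew field. You should do the same; as it stands, the step you identify as the main obstacle is left undone. (Also note that the linearization of a formula with scalar leaves carries a constant term, $A_{\Phi_v}=X_0+t_1X_1+\cdots+t_mX_m$, which is why the paper invokes Lemma~\ref{assumel} before applying the rank estimate; your $A_\Phi=t_1X_1+\cdots+t_mX_m$ silently drops it.) Once you (a) switch to the family $\{A_{\Phi_v}\}_v$ and invoke Remark~\ref{Z-dense} to pick a common $T\in\M_{2n-1,2n-1}^m$, and (b) cite rather than re-derive the \cite{HW} linearization, the argument closes and coincides with the paper's proof.
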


\begin{proof}
Given a formula $\Phi$ of size $n$, for each gate $v$, we denote by $\Phi_v$ the sub-formula rooted at $\Phi$. We can construct linear matrices $A_{\Phi_v}$ (in the variables $t_1,t_2,\dots,t_m$) such that $\Phi$ is a correct formula if and only if $A_{\Phi_v}$ is invertible (over the skew field $L$) for all $v$ (see \cite[Corollary~7.2]{HW}). Moreover the matrices $A_{\Phi_v}$ have size $\leq 2n$ (see \cite[Theorem~2.5]{HW}). 

Assume $\Phi$ is a correct formula. Since $A_{\Phi_v} = X_0 + t_1X_1 + t_2X_2 + \dots + t_mX_m$ is invertible, for some $k$ there exists $T = (T_1,T_2, \dots, T_m) \in \M_{k,k}^m$ such that $A_{\Phi_v}(T) = X_0 \otimes I + \sum_{i=1}^m X_i \otimes T_i$ is invertible (see Proposition~\ref{inv.skew} and Lemma~\ref{assumel}). We can assume $k = 2n-1$ by Proposition~\ref{red}. In fact, by Remark~\ref{Z-dense} a general $m$-tuple $T \in \M_{2n-1,2n-1}^m$ suffices. Hence for a sufficiently general $T \in \M_{2n-1,2n-1}^m$, all the $A_{\Phi_v}(T)$ are simultaneously invertible and hence $\widehat{\Phi}(T)$ is defined (see \cite[Proposition~7.1]{HW}).
\end{proof}

\subsection*{Rational identity testing} 
Deciding whether a non-commutative formula computes the zero function is called the rational identity testing problem. Hrube\v s and Wigderson give a randomized algorithm for rational identity testing whose run time is polynomial in $n$ and $w(n)$. See \cite[Section~7]{HW} for the details. Thus the above bound on $w(n)$ gives a polynomial time randomized algorithm for rational identity testing for infinite fields in arbitrary characteristic.

As observed in \cite{GGOW}, we have a deterministic polynomial time algorithm if $\K = \Q$, since the invertibility of linear matrices can be decided in deterministic polynomial time.

\subsection*{Eliminating inverse gates} Let $f$ be a non-commutative polynomial in $\K\langle t_1,t_2,\dots,t_m \rangle$ of degree $k$, which can be computed by a formula of size $n$. Then $f$ can be computed by a formula of size $n^{O(\log^2(k) \log(n) )}$ without inverse gates.  (see \cite[Corollary~8.4]{HW}).

\subsection*{Lower bounds on formula size} Problem 1 in \cite{HW} asks for an explicit  family of non-commutative polynomials which cannot be computed by a polynomial size formula with divisions. We give an answer to this problem. In \cite{Nisan}, it was proved that any formula without divisions computing the non-commutative determinant (or permanent) of degree $k$ 
must have size $2^{\Omega(k)}$. To find the size of a formula that allows divisions, we use our bound for eliminating inverse gates, and solve $2^{\Omega(k)}=n^{O(\log^2(k) \log(n) )}$ for $n$. This
shows that  any formula with divisions computing the non-commutative determinant (or permanent) of degree $k$
has size $2^{\Omega(\sqrt{k}/\log(k) )}$.

\subsection*{Acknowledgements}
The authors like to thank Avi Widgerson and Ketan Mulmuley for helpful discussions. 
We  would like to thank the authors of \cite{IQS,HW,GGOW,GCT} for sending early versions of their papers.
%The second author would like to thank the first author for suggesting the problem, and his guidance throughout the project.

\end{document}